\documentclass[11pt,reqno]{amsart}

\usepackage[T1]{fontenc}
\usepackage[utf8]{inputenc}
\usepackage{lmodern}
\usepackage{microtype}
\usepackage[a4paper,margin=2.5cm]{geometry}
\usepackage{amsmath,amssymb,mathtools}
\usepackage{mathrsfs}
\usepackage{booktabs}
\usepackage{float}
\usepackage{graphicx}
\usepackage{array}
\usepackage{tabularx}
\usepackage{enumitem}
\usepackage{xcolor}
\usepackage{hyperref}
\usepackage{cleveref}

\hypersetup{
  colorlinks=true,
  linkcolor=blue!45!black,
  citecolor=blue!45!black,
  urlcolor=blue!55!black,
  pdftitle={Morse Topology, Zero Products, and Elliptic Prime Crossings},
  pdfauthor={Michel Planat}
}

\setlist[itemize]{leftmargin=1.6em,itemsep=0.28em,topsep=0.4em}
\setlist[enumerate]{leftmargin=1.8em,itemsep=0.38em,topsep=0.45em}
\allowdisplaybreaks
\newtheorem{theorem}{Theorem}
\newtheorem{proposition}{Proposition}
\newtheorem{conjecture}{Conjecture}
\newtheorem{problem}{Problem}
\theoremstyle{remark}
\newtheorem{remark}{Remark}
\newcommand{\Q}{\mathbb{Q}}
\newcommand{\R}{\mathbb{R}}
\newcommand{\T}{\mathbb{T}}
\newcommand{\Fq}{\mathbb{F}}
\newcommand{\ShaE}{\mathop{\mathrm{Sha}}}
\newcommand{\Reg}{\operatorname{Reg}}
\newcommand{\ord}{\operatorname{ord}}

\newcommand{\Corr}{\operatorname{Corr}}

\newcommand{\ST}{\mathrm{ST}}
\newcommand{\e}{\mathrm{e}}

\newcolumntype{P}[1]{>{\raggedright\arraybackslash}p{#1}}

\newcommand{\pacsline}[1]{%
  \begingroup
  \par\vspace{-0.6em}
  \begin{center}\small\textsc{pacs numbers:}\ #1\end{center}
  \par\vspace{0.2em}
  \endgroup}

\title[Morse Topology and Elliptic Prime Crossings]
{Morse Topology, Zero Products,\\ and Elliptic Prime Crossings\\[0.45em]
\normalfont\large Arithmetic Information at the Interface of ECRH and BSD}

\author{Michel Planat}
\address{Universit\'e Marie et Louis Pasteur, Institut FEMTO-ST, CNRS,
Besan\c{c}on, France}
\email{michel.planat@femto-st.fr}

\date{August 2026}

\subjclass[2020]{Primary 11G40, 11M26;
Secondary 11G05, 11N05, 37A44, 57R70, 60B20}

\keywords{Elliptic curve $L$-functions; Birch--Swinnerton-Dyer conjecture;
Hadamard product; low-lying zeros; odd orthogonal symmetry; hard edge;
Morse theory; Kronecker flow; shrinking targets; Chebyshev bias}

\begin{document}

\begin{abstract}
We study how the noncentral zeros of an elliptic-curve $L$-function enter the Birch--Swinnerton-Dyer (BSD) leading coefficient and the prime-scale crossings of an elliptic Chebyshev race.  Throughout, ECRH abbreviates the \emph{Riemann hypothesis for the Hasse--Weil $L$-function of a single elliptic curve}: in the normalization used here, every nontrivial zero of $L(E,s)$ lies on the line $\Re s=1$.  No relation to the classical Riemann hypothesis is assumed or asserted.  The completed $L$-function gives an exact Hadamard decomposition, and under ECRH the two zero statistics
\[
  J_E=\sum_{\gamma_E>0}\log\!\left(1+\frac{1}{4\gamma_E^2}\right),
  \qquad
  V_E=2\sum_{\gamma_E>0}\frac{1}{\tfrac14+\gamma_E^2}
\]
satisfy the exact positive expansion $J_E=V_E/8+\Delta_E$.  For seventeen rank-one isogeny classes of conductor $50700$, the first zero contributes on average $82.4\%$ of $\Delta_{E,20}$, while independent edge values reveal a discrete $35/36$ zero-count split in the tail beyond height $20$.

For Haar $SO(2N+1)$ we prove the fixed-dimensional hard-edge law $\Pr\{\mathcal J_N>y\}\sim C_Ne^{-3y/2}$ and explain why finite-conductor comparisons require an explicit convention for nonintegral effective dimension.  Every finite zero truncation also defines a Morse function on a phase torus; its critical values, level sets and coarea density provide a rigorous topological model for crossing statistics.

The final construction is modelled on \emph{false Chebyshev primes}.  The term is not standard: it was introduced in earlier work \cite{PlanatSole2013,PlanatFalseCheb2026}, in the setting of the classical Riemann hypothesis, for primes whose own jump of the Chebyshev function carries a cumulative prime race across a reference level.  We transfer only that local mechanism and define elliptic crossing primes through the midpoint and half-jump of a logarithmically weighted Frobenius race.  A four-stage conductor-$50700$ computation shows agreement of the coarse arithmetic and zero-derived fields, but the omitted-zero noise above height $20$ exceeds the individual Frobenius windows.  The experiment therefore identifies a quantitative resolution barrier rather than a failure of the prime-sampled shrinking-target transfer.
\end{abstract}

\maketitle

\pacsline{02.10.De; 02.40.Sf; 02.40.Vh; 02.50.Cw; 05.45.-a}

\section{Introduction}
\label{sec:introduction}

The Hasse--Weil $L$-function of an elliptic curve carries two complementary kinds of arithmetic information.  Its central Taylor coefficient is governed conjecturally by Birch--Swinnerton-Dyer (BSD), while its noncentral zeros control explicit-formula fluctuations of prime sums.  This paper develops a common framework for these two aspects.  The first objective is to measure exactly how the noncentral zeros enter the central leading coefficient and how much arithmetic information remains after only finitely many zeros are known.  The second is to understand the low-zero hard edge through finite-dimensional orthogonal models.  The third is to interpret the zero-generated explicit formula as a phase flow whose level sets organize elliptic analogues of false Chebyshev primes.

Let $E/\Q$ have conductor $N_E$, Hasse--Weil function $L(E,s)$, and completed function
\begin{equation}
  \Lambda_E(s)=N_E^{s/2}(2\pi)^{-s}\Gamma(s)L(E,s).
  \label{eq:completed}
\end{equation}
It satisfies
\begin{equation}
  \Lambda_E(s)=w_E\Lambda_E(2-s),
  \qquad w_E\in\{\pm1\}.
  \label{eq:functional-equation}
\end{equation}
Write
\begin{equation}
  r=r_E=\ord_{s=1}L(E,s),
\end{equation}
so that $w_E=(-1)^r$.  BSD predicts that $r$ is the Mordell--Weil rank and identifies $L^{(r)}(E,1)/r!$ with periods, the regulator, Tamagawa factors, torsion and $\ShaE(E)$ \cite{BSD1965,Cassels1965}.

No implication between the classical Riemann hypothesis and ECRH is asserted.  The question is internal to the elliptic $L$-function: how do its noncentral zeros, its central leading coefficient and its prime-side Frobenius data constrain one another?

\paragraph{Standing hypotheses.}
Two hypotheses recur, and both are named wherever they are used.
\begin{itemize}
  \item \textbf{ECRH}, the Riemann hypothesis for $L(E,s)$: every nontrivial zero satisfies $\Re s=1$, so that $\rho_E=1\pm i\gamma_E$ with $\gamma_E$ real.  Only the zeros of the single curve $E$ are involved.
  \item \textbf{LI}, linear independence: the positive ordinates $\gamma_E$ are linearly independent over $\Q$.  This is the elliptic counterpart of the hypothesis under which Rubinstein and Sarnak obtained limiting logarithmic distributions for classical prime races \cite{RubinsteinSarnak1994}.  It is invoked only for limiting distributions, Bessel products and unique ergodicity of a phase flow; \cref{prop:torus-local-time} is the one place where it enters a proof.
\end{itemize}
Modularity of $E/\Q$ \cite{Wiles1995,BCDT2001} is used without further comment: it is what makes $\Lambda_E$ entire of order one, so that \eqref{eq:functional-equation} holds globally.  Statements requiring neither hypothesis are marked \emph{exact} in \cref{tab:epistemic-dictionary}.

\subsection{Main results}
The principal achievements are as follows.
\begin{enumerate}
  \item The Hadamard product gives an exact decomposition of the central leading coefficient.  Under ECRH the two zero statistics satisfy
  \[
    J_E=\frac18V_E+\Delta_E,
    \qquad
    \Delta_E=\sum_{k\ge2}\frac1k\sum_{\gamma_E>0}(1+4\gamma_E^2)^{-k}>0.
  \]
  Thus the observed $J_E$--$V_E$ relation is governed by an exact nonlinear remainder rather than by a universal regression law.
  \item For seventeen rank-one isogeny classes of conductor $50700$, the first zero accounts on average for $82.4\%$ of $\Delta_{E,20}$.  Independent evaluations of $L(E,3/2)$ determine the omitted zero-product tail and separate it into groups with $35$ and $36$ listed ordinates.
  \item For Haar $SO(2N+1)$ we prove a fixed-dimensional large-product tail with exponent $3/2$.  The exponent is stable under elliptic unfolding, whereas the finite-dimensional constant and the first-eigenangle law depend on the chosen conductor-to-matrix matching convention.
  \item A finite zero truncation is a Morse observable on $\T^M$.  Its signed amplitude sums are the critical values, and the coarea formula identifies the continuous level density.  This supplies a literal topological model for the level profiles used in the crossing construction.
  \item The midpoint mechanism of false Chebyshev primes \cite{PlanatFalseCheb2026} leads to intrinsic elliptic crossing primes.  The conductor-$50700$ computation shows that a height-$20$ zero truncation reproduces the coarse race but cannot resolve individual prime windows because the omitted-zero variance is too large.
\end{enumerate}

\subsection{Six layers, three of them topological}
\label{subsec:layers}

Programmes of this kind fail in a characteristic way: a statement established at one layer is quietly reused as though it had been established at another.  The layers are therefore fixed once here, and every later section is a statement about a named layer rather than about the framework as a whole.  \Cref{tab:epistemic-dictionary} records them, split into the three layers that carry genuine topological content and the four that do not.

\paragraph{The topological block.}
Exactly three layers are topological, and they are topological in three \emph{different} senses.  They are developed in \cref{sec:topological-interpretation} and collected in the synthesis \eqref{eq:topological-synthesis}.

\begin{enumerate}[label=\textbf{(T\arabic*)},leftmargin=2.9em]
  \item \emph{Cohomological, and local.}  For a good prime $q$ the Frobenius trace is literally $a_q(E)=\operatorname{tr}(\operatorname{Frob}_q\mid H^1_{\mathrm{\acute et}})$ on a fixed two-dimensional $\Q_\ell$-space, so the prime side of the race consists of cohomological invariants of single reductions.  This is the only place where an actual cohomology group appears, and it is local, finite-dimensional and fixed.  At no point are the global ordinates $\{\gamma_{E,k}\}$ identified with eigenvalues of an operator.
  \item \emph{Monodromy, and only conjecturally so over $\Q$.}  In function-field families the Zariski closure of a geometric monodromy group governs the distribution of Frobenius conjugacy classes, and Katz--Sarnak theory converts this into a compact classical group.  Over $\Q$ the group $SO(2N+1)$ is a symmetry \emph{model} for the low-lying zeros of a family, not a theorem about a single curve: the hard-edge tail law of \cref{sec:hard-edge-tail} is a consequence of Weyl measure, whereas the identification of an arithmetic family with that measure is a prediction.
  \item \emph{Differential-topological, and exact at each truncation.}  Every finite truncation of the explicit formula is a Morse function $\Phi_{E,M}$ on a phase torus $\T^M$, with $2^M$ critical points and critical values $\sum_k\varepsilon_kA_{E,k}$; its regular level sets are codimension-one submanifolds, and the coarea formula converts Kronecker occupation into a level density.  This layer is unconditional at fixed $M$, and it is the only one whose topology is used quantitatively.
\end{enumerate}

\paragraph{The remaining layers.}
The other four are analytic, measure-theoretic, conditional and open respectively, and none of them is a topological statement.  The Hadamard decomposition and the positive expansion $J_E=V_E/8+\Delta_E$ are exact identities about a single $L$-function.  Derivative excision is the layer most easily mistaken for topology: it deforms a probability measure along the fibres of $\log D_A$, and \cref{prop:conditional-invariance} shows that the conditional law is unchanged, so the deformation is measure-theoretic and not a deformation of the group.  It is treated inside \cref{sec:topological-interpretation} precisely in order to be excluded from \eqref{eq:topological-synthesis}.  The BSD information interval is conditional both on BSD and on a certified tail bound.  The prime-sampled shrinking-target transfer is open, and \cref{subsec:50700-crossing-pilot} measures how far from decidable it currently is.

\begin{table}[H]
\centering
\small
\caption{Mathematical layers and their logical status.  The upper block contains the three layers with genuine topological content, in the three distinct senses collected in \eqref{eq:topological-synthesis}; the lower block contains the analytic, measure-theoretic, conditional and open layers.  The last column points to the section in which each layer is developed.}
\label{tab:epistemic-dictionary}
\begin{tabularx}{\textwidth}{@{}P{2.45cm}P{3.05cm}P{2.35cm}>{\raggedright\arraybackslash}X P{1.35cm}@{}}
\toprule
Layer & Object & Status & Role & \S\\
\midrule
\multicolumn{5}{@{}l@{}}{\emph{Topological layers, in three distinct senses; synthesis in \eqref{eq:topological-synthesis}}}\\
\addlinespace[3pt]
\textbf{(T1)} Local arithmetic geometry
  & Frobenius on $H^1_{\mathrm{\acute et}}$
  & Exact
  & Produces $a_q(E)$ and the local Euler factors, hence the whole prime side of the race and of the crossing sets.
  & \ref{subsec:local-cohomology}, \ref{sec:race}\\
\addlinespace[3pt]
\textbf{(T2)} Family symmetry and monodromy
  & Haar $SO(2N+1)$ eigenangles; geometric monodromy in function-field analogues
  & Theorem in function-field families; statistical model over $\Q$
  & Models odd-orthogonal hard-edge repulsion and finite-conductor low-zero statistics; source of the $e^{-3y/2}$ tail law.
  & \ref{subsec:family-symmetry}, \ref{sec:hard-edge-tail}\\
\addlinespace[3pt]
\textbf{(T3)} Finite phase topology
  & Morse observable $\Phi_{E,M}:\T^M\to\R$, level sets and coarea density
  & Exact at fixed $M$; local-time limit under LI
  & Critical values are the signed amplitude sums; regular level sets organize crossings and continuous logarithmic-time occupation.
  & \ref{subsec:torus-flow}, \ref{subsec:crossing-topology}\\
\midrule
\multicolumn{5}{@{}l@{}}{\emph{Analytic, measure-theoretic, conditional and open layers}}\\
\addlinespace[3pt]
Global analytic identity
  & Hadamard decomposition and $J_E=V_E/8+\Delta_E$
  & Exact; zero-sum form under ECRH
  & Connects the central leading coefficient with the noncentral zero multiset; the positive $\Delta_E$ replaces any putative regression law.
  & \ref{sec:hadamard}, \ref{sec:endpoint}\\
\addlinespace[3pt]
Derivative excision \emph{(not topological)}
  & Reweighting $d\mu_W\propto W(\log D_A)\,d\mu_0$
  & Deformation of measure; conditional law invariant
  & Deliberately excluded from the block above: only the derivative marginal can be tested, since $x_1\mid\log D_A$ is unchanged.
  & \ref{subsec:excision-measure}, \ref{sec:dhkms-tests}\\
\addlinespace[3pt]
BSD information interval
  & Truncated zero product plus a certified tail bound
  & Conditional on BSD and on the bound
  & Gives consistency intervals for $|\ShaE(E)|$ and zero-list completeness diagnostics.
  & \ref{sec:bsd-information}\\
\addlinespace[3pt]
Arithmetic sampling
  & Prime-sampled shrinking targets around $\Sigma_{E,M}(c)$
  & Open
  & Transfers the phase-torus level geometry to actual Frobenius jumps; at present the resolution barrier is quantified, not the transfer.
  & \ref{sec:crossings}, \ref{subsec:50700-crossing-pilot}\\
\bottomrule
\end{tabularx}
\end{table}

\subsection{Organization and reading paths}
\Cref{sec:race} introduces the logarithmically weighted elliptic race and the origin of its mean.  \Cref{sec:topological-interpretation} develops \textbf{(T1)}--\textbf{(T3)} and separates them from the measure-theoretic layer.  \Cref{sec:hadamard,sec:endpoint} establish the exact zero-product identities and the positive nonlinear remainder.  \Cref{sec:numerics} analyzes the conductor-$50700$ data, the low-zero hard edge and the finite-dimensional orthogonal comparison.  \Cref{sec:bsd-information} develops the truncated BSD information interval and its zero-counting requirement.  \Cref{sec:crossings} defines the elliptic analogues of false Chebyshev primes and reports the finite-$X$ power analysis.  \Cref{sec:outlook} summarizes the remaining mathematical questions.

Three routes are self-contained.  A reader interested in the exact statements may read \cref{sec:hadamard,sec:endpoint} and \cref{sec:bsd-information}; a reader interested in the topological content may read \cref{sec:topological-interpretation} together with \cref{subsec:crossing-topology}; a reader interested only in the conductor-$50700$ evidence may read \cref{sec:numerics} and \cref{subsec:50700-crossing-pilot}.

\section{The elliptic race and the origin of its mean}
\label{sec:race}

\paragraph{Literature.}
Biased prime races go back to Chebyshev, and their modern conditional form is due to Rubinstein and Sarnak \cite{RubinsteinSarnak1994}: under the relevant Riemann hypothesis and LI, the logarithmically normalized error term has a limiting distribution whose characteristic function is an explicit Bessel product.  The elliptic case was raised by Sarnak in his letter to Mazur on the bias of $\tau(p)$ \cite{Sarnak2007} and set in a wider context by Mazur \cite{Mazur2008}; the conditional analysis for elliptic curves, and in particular the dependence of the bias on the rank, is due to Fiorilli \cite{Fiorilli2014}.  A general framework for limiting distributions of error terms of this shape is given by Akbary, Ng and Shahabi \cite{ANS2014}.  The explicit formula and the zero-counting estimates used throughout are standard \cite{IwaniecKowalski2004}.  The Hasse bound $|a_q(E)|\le2\sqrt q$ is a special case of the Weil conjectures proved by Deligne \cite{Deligne1974}; the Sato--Tate law for the angles $\theta_q$ of a non-CM curve is a theorem \cite{BLGHT2011}, and the symmetric-power automorphy needed to control the prime-power terms is supplied by Newton and Thorne \cite{NewtonThorne2021I,NewtonThorne2021II}.

For a good prime $q\nmid N_E$, set
\begin{equation}
  a_q(E)=q+1-\#E(\Fq_q),
  \qquad |a_q(E)|\le 2\sqrt q.
\end{equation}
The crossing problem is most cleanly formulated with the logarithmically weighted elliptic race
\begin{equation}
  \mathcal E_E^{*}(x)
  =-\frac{1}{\sqrt x}
   \sum_{\substack{q\le x\\q\nmid N_E}}
   \frac{a_q(E)}{\sqrt q}\log q.
  \label{eq:race}
\end{equation}
Its jump at a good prime is
\begin{equation}
  \mathcal E_E^{*}(q)-\mathcal E_E^{*}(q^-)
  =-\frac{a_q(E)\log q}{q},
  \label{eq:race-jump}
\end{equation}
The logarithmic weight avoids the systematic $2\mu_E/\log x+O(1/\log^2x)$ drift produced when an unweighted prime sum is converted by partial summation.

Under ECRH and the usual hypotheses controlling multiplicities and prime powers, the explicit formula has the schematic limiting form
\begin{equation}
  \mathcal E_E^{*}(\e^u)=\mu_E+Z_E(u)+o(1),
  \qquad \mu_E=2r-1.
  \label{eq:race-explicit}
\end{equation}
The term $2r$ comes from the central zero.  The additional $-1$ is the stable good-prime-square contribution.  Indeed, with $\lambda_p=a_p(E)/\sqrt p$, the normalized coefficient at $p^2$ is $\lambda_p^2-2$, whose Sato--Tate mean is $-1$.  Thus
\begin{equation}
  \underbrace{2r}_{\text{central zero}}
  \quad+\quad
  \underbrace{(-1)}_{\text{prime squares / }\operatorname{Sym}^2 E}
  \label{eq:mean-provenance}
\end{equation}
is the correct provenance of the mean.  For finite $x$, the centred prime-square fluctuation
\begin{equation}
  Q_{E,2}(x)
  =1+\frac1{\sqrt x}
   \sum_{\substack{p^2\le x\\p\nmid N_E}}
   (\lambda_p^2-2)\log p
  \label{eq:prime-square-fluctuation}
\end{equation}
and the smaller higher-prime-power and bad-prime terms should be retained whenever the target window is of order $\log q/\sqrt q$.  For a non-CM curve the symmetric-power analytic structure needed for the limiting mean is available through automorphy \cite{NewtonThorne2021II}.

Under ECRH the noncentral zeros have the form
\begin{equation}
  \rho_E=1\pm i\gamma_E,
  \qquad \gamma_E>0,
\end{equation}
and generate $Z_E$.  Under a linear-independence hypothesis, its centred limiting characteristic function is formally
\begin{equation}
  \widehat g_E(t)
  =\prod_{\gamma_E>0}
  J_0\!\left(\frac{2t}{\sqrt{\tfrac14+\gamma_E^2}}\right).
  \label{eq:bessel-product}
\end{equation}
The variance is
\begin{equation}
  V_E=2\sum_{\gamma_E>0}\frac{1}{\tfrac14+\gamma_E^2}.
  \label{eq:variance}
\end{equation}
The low zeros must be compared with the orthogonal symmetry type and finite-conductor repulsion appropriate to the family \cite{ILS2000,Miller2004,Miller2006,Marshall2013}.

\section{A layered topological interpretation}
\label{sec:topological-interpretation}

The word \emph{topological} enters this framework in several mathematically distinct senses.  They are complementary, but they must not be conflated.  The local Euler factors are cohomological; the low-zero statistics of a family are organized by a compact symmetry type; derivative excision is a deformation of probability measure rather than of the underlying group; and a finite truncation of the explicit formula defines a genuine quasiperiodic flow on a torus.  The section records the precise content of each layer and the limits of the interpretation.  The three layers that survive as topological are rows \textbf{(T1)}--\textbf{(T3)} of \cref{tab:epistemic-dictionary}; \cref{subsec:excision-measure} is included here in order to be excluded from them.

\subsection{Local cohomology and Frobenius traces}
\label{subsec:local-cohomology}

\paragraph{Literature.}
The $\ell$-adic cohomology of curves over finite fields, and the reading of $a_q(E)$ as a Frobenius trace, are treated in Deligne \cite{Deligne1974} and Milne \cite{Milne1980}; for elliptic curves specifically see Silverman \cite[Ch.~V]{Silverman2009}.  Explicit computation of the $a_q(E)$ and of the invariants used in \cref{sec:bsd-information} follows Cremona \cite{Cremona1997}.

For every good prime $q\nmid N_E$, let
\begin{equation}
  V_{\ell,q}(E)
  =H^1_{\mathrm{\acute et}}
  (E_{\overline{\Fq}_q},\Q_\ell),
  \qquad \ell\ne q.
\end{equation}
This is a two-dimensional $\Q_\ell$-vector space.  Geometric Frobenius has characteristic polynomial
\begin{equation}
  \det\!\left(T-\operatorname{Frob}_q\mid V_{\ell,q}(E)\right)
  =T^2-a_q(E)T+q,
  \label{eq:frobenius-characteristic-polynomial}
\end{equation}
so that
\begin{equation}
  a_q(E)
  =\operatorname{tr}\!\left(
  \operatorname{Frob}_q\mid V_{\ell,q}(E)
  \right),
  \qquad
  \det\!\left(
  \operatorname{Frob}_q\mid V_{\ell,q}(E)
  \right)=q.
  \label{eq:frobenius-trace}
\end{equation}
Thus the prime-side coefficients in \eqref{eq:race} are literally cohomological, and the local Euler factor is
\begin{equation}
  L_q(E,s)^{-1}
  =\det\!\left(
  I-q^{-s}\operatorname{Frob}_q
  \mid V_{\ell,q}(E)
  \right).
  \label{eq:local-factor-cohomology}
\end{equation}

This local statement must be distinguished from a global spectral assertion.  The infinitely many zeros $1\pm i\gamma_{E,k}$ are not known to be eigenvalues of Frobenius on the fixed two-dimensional space $H^1_{\mathrm{\acute et}}(E_{\overline\Q},\Q_\ell)$.  No cohomological Hilbert--P\'olya operator over $\Q$ is assumed here.  In particular,
\begin{equation}
  N_E(T)=\#\{k:0<\gamma_{E,k}\le T\}
\end{equation}
is a spectral counting function, not a Betti number or a varying cohomological dimension.  Its staircase structure reflects the discreteness of the zero multiset.

\subsection{Family symmetry and geometric monodromy}
\label{subsec:family-symmetry}

\paragraph{Literature.}
The monodromy route from function-field families to compact classical groups is developed in Katz and Sarnak \cite{KatzSarnak1999}, with the number-field philosophy summarized in \cite{KatzSarnak1999BAMS}.  One-level densities for families over $\Q$ were computed by Iwaniec, Luo and Sarnak \cite{ILS2000} and, for elliptic curves, by Miller \cite{Miller2004,Miller2006}; the excess repulsion near the central point observed at finite conductor was analysed by Marshall \cite{Marshall2013}.  Lower-order terms that make a finite-conductor comparison quantitative are given by Huynh, Keating and Snaith \cite{HKS2009}, and in the ratios framework by Conrey and Snaith \cite{ConreySnaith2007}.

The second layer is a family-level symmetry principle.  In suitable function-field families, a sheaf of cohomology groups carries a geometric monodromy representation, and the Zariski closure of its image controls the distribution of Frobenius conjugacy classes.  Katz--Sarnak theory then relates the limiting statistics to compact classical groups \cite{KatzSarnak1999}.  In that setting the route from monodromy to random matrices is genuinely geometric.

For elliptic curves over $\Q$, the compact group should be interpreted more cautiously.  It is a symmetry model for the low-lying zeros of a natural family, not a proven monodromy group attached to a single curve.  In an odd-sign family the forced central zero corresponds, on the random-matrix side, to the fixed eigenvalue $1$ of $SO(2N+1)$; the remaining positive eigenangles model the noncentral zeros.  The hard-edge repulsion and the tail law in \cref{sec:hard-edge-tail} are therefore consequences of the Weyl measure on the compact homogeneous space, while the identification of a given number-field family with that model remains an arithmetic-statistical prediction.

The effective matrix size
\begin{equation}
  N_{\rm std}=\log\frac{\sqrt{N_E}}{2\pi}
\end{equation}
is likewise a density-matching parameter.  It is not the dimension of $H^1$, and it need not be an integer.  Its role is to align the mean eigenangle density with the mean density of zeros near the central point.

\subsection{Excision as a deformation of measure}
\label{subsec:excision-measure}

\paragraph{Literature.}
Characteristic polynomials of random matrices as models for values of $L$-functions originate with Keating and Snaith \cite{KeatingSnaith2000}; the derivative statistics $D_A=|P_A'(1)|$ relevant to rank-one curves were studied by Snaith \cite{Snaith2005}.  The excised ensemble is due to Due\~nez, Huynh, Keating, Miller and Snaith \cite{DHKMS2012}, whose motivation is the arithmetic discretization of central values coming from the Waldspurger--Kohnen--Zagier formula \cite{KohnenZagier1981}; the corresponding discussion for odd quadratic twists is in Conrey, Rubinstein, Snaith and Watkins \cite{CRSW2007}.  The point of the present subsection is that the reweighting is a change of measure and nothing more, so the invariance recorded in \cref{prop:conditional-invariance} is elementary but restrictive.

Let $\mathcal M=SO(2N+1)$ with normalized Haar probability measure $\mu_0$, and let
\begin{equation}
  Y(A)=\log D_A,
  \qquad D_A=|P_A'(1)|.
\end{equation}
For a nonnegative integrable weight $W$, define
\begin{equation}
  d\mu_W(A)
  =\frac{W(Y(A))}{\int_{\mathcal M}W(Y(B))\,d\mu_0(B)}\,d\mu_0(A).
  \label{eq:weighted-Haar-measure}
\end{equation}
If $W>0$, the path
\begin{equation}
  d\mu_t(A)
  =\frac{W(Y(A))^t}{\int_{\mathcal M}W(Y(B))^t\,d\mu_0(B)}\,d\mu_0(A),
  \qquad 0\le t\le1,
  \label{eq:measure-homotopy}
\end{equation}
is a continuous interpolation between Haar measure and the reweighted law.  It is legitimate to call this a \emph{homotopy of probability measures}.  It is not a homotopy of the group $SO(2N+1)$, nor a gauge transformation, unless an additional bundle, gauge group and equivalence relation are supplied.  A sharp cutoff, for which $W$ vanishes on a set of positive Haar measure, is excluded from this strictly positive path; it may be approached by soft positive weights but does not belong to the same equivalence class of measures.

The exact invariant of this deformation is measure-theoretic.

\begin{proposition}[Conditional invariance under derivative reweighting]
\label{prop:conditional-invariance}
Let $X$ be any measurable observable on $\mathcal M$, let $Y=\log D_A$, and let $\mu_W$ be defined by \eqref{eq:weighted-Haar-measure}.  Wherever regular conditional distributions are defined,
\begin{equation}
  p_W(x,y)
  =\frac{W(y)p_0(x,y)}{\mathbb E_0W(Y)},
  \qquad
  p_W(x\mid y)=p_0(x\mid y).
  \label{eq:excision-factorization}
\end{equation}
Thus a derivative-only reweighting changes the marginal law of $Y$ but not the conditional law of $X$ at fixed $Y$.
\end{proposition}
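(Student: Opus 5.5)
The plan is to work at the level of joint laws rather than densities, and then specialize to densities wherever they exist. Let $(X,Y)$ be the pair of observables on $\mathcal M$ and write $\nu_0$ for its joint law under Haar measure $\mu_0$. First I would compute the joint law $\nu_W$ under $\mu_W$. For any bounded measurable $f$ on the product space, the definition \eqref{eq:weighted-Haar-measure} gives
\[
  \mathbb E_W f(X,Y)=\frac{\mathbb E_0\bigl[W(Y)f(X,Y)\bigr]}{\mathbb E_0W(Y)}.
\]
Since the weight $W(Y)$ depends on $A$ only through $Y$, this shows $d\nu_W(x,y)=W(y)\,d\nu_0(x,y)/\mathbb E_0W(Y)$. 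When $\nu_0$ has a density $p_0$ with respect to a product reference measure, this is exactly the first identity in \eqref{eq:excision-factorization}. The only requirement is $0<\mathbb E_0W(Y)<\infty$, which is implicit in the definition of $\mu_W$.

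Next I would read off the $Y$-marginal. Integrating out $x$ gives $d\nu_W^Y(y)=W(y)\,d\nu_0^Y(y)/\mathbb E_0W(Y)$, so the marginal of $Y$ is reweighted by $W$ and in general changes.

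For the conditional law, I would disintegrate $\nu_0$ as $\nu_0(dx,dy)=\kappa_0(y,dx)\,\nu_0^Y(dy)$, with $\kappa_0$ a regular conditional distribution. Such a kernel exists because $\mathcal M$ is a compact manifold, hence standard Borel, and $X$ can be taken with values in a Polish space. Substituting into the formula for $\nu_W$ gives
\[
  \nu_W(dx,dy)=\kappa_0(y,dx)\,\frac{W(y)\,\nu_0^Y(dy)}{\mathbb E_0W(Y)}=\kappa_0(y,dx)\,\nu_W^Y(dy).
\]
So $\kappa_0$ is also a version of the conditional law under $\mu_W$. By the almost-sure uniqueness of disintegrations, $p_W(x\mid y)=p_0(x\mid y)$ for $\nu_W^Y$-almost every $y$. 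In density form this is just the cancellation $W(y)p_0(x,y)/\int W(y)p_0(x',y)\,dx'=p_0(x\mid y)$, valid wherever $W(y)>0$ and $p_0^Y(y)>0$.

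The only delicate point is the phrase ``wherever regular conditional distributions are defined.'' The equality holds only $\nu_W^Y$-almost surely. On the set $\{W=0\}$, which is possible for a sharp cutoff, the conditional under $\mu_W$ is undefined, and any version may be chosen there. I would state the result with this almost-sure qualification and note that it is precisely the restriction discussed after \eqref{eq:measure-homotopy}. The remaining steps are routine measure theory.
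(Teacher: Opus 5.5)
Your proposal is correct and follows the same route as the paper: disintegrate the Haar law along the fibres of $Y=\log D_A$, and observe that the Radon--Nikodym factor $W(Y)$ is constant on each fibre, so it cancels in the conditional law. Your version makes the paper's one-line argument precise. You work with joint laws and kernels, justify existence of the disintegration on the standard Borel space $\mathcal M$, and record that $p_W(x\mid y)=p_0(x\mid y)$ holds only for $\nu_W^Y$-almost every $y$, in particular not on $\{W=0\}$.
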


\begin{proof}
Disintegrate $d\mu_0$ with respect to $Y$.  Since the Radon--Nikodym factor $W(Y)$ is constant on each fiber $Y=y$, it cancels when the joint density is divided by the reweighted marginal density of $Y$.
\end{proof}

This proposition is the precise content of the conditional test in \cref{sec:dhkms-tests}.  The preserved conditional law is not a topological invariant of the group; it is an invariant of the restricted class of measure changes whose Radon--Nikodym derivative depends only on $Y$.

\subsection{The explicit formula as a torus flow}
\label{subsec:torus-flow}

\paragraph{Literature.}
A truncated explicit formula is a trigonometric polynomial in the logarithmic variable, hence an almost periodic function in the sense of Bohr \cite{Bohr1947}.  The associated linear flow on $\T^M$, and its unique ergodicity when the frequencies are rationally independent, are classical \cite{CFS1982}, \cite{KatokHasselblatt1995}.  For the Morse theory used in \cref{prop:phase-morse} see Milnor \cite{Milnor1963}; for the coarea formula used in \cref{prop:torus-local-time} see Federer \cite{Federer1969} or Evans and Gariepy \cite[Ch.~3]{EvansGariepy1992}.  The shrinking-target problems that \cref{subsec:crossing-topology} reduces to are studied for general flows by Hill and Velani \cite{HillVelani1995} and Kleinbock and Margulis \cite{KleinbockMargulis1999}.

A literal dynamical-topological structure appears after truncating the noncentral explicit formula.  For positive ordinates $\gamma_{E,1},\ldots,\gamma_{E,M}$, write the truncated centred field as
\begin{equation}
  Z_{E,M}(u)
  =\sum_{k=1}^{M}A_{E,k}
   \cos(\gamma_{E,k}u+\delta_{E,k})
  =\Phi_{E,M}(\boldsymbol\gamma_Eu),
  \label{eq:truncated-phase-field}
\end{equation}
where
\begin{equation}
  \Phi_{E,M}(\boldsymbol\phi)
  =\sum_{k=1}^{M}A_{E,k}\cos(\phi_k+\delta_{E,k}),
  \qquad
  A_{E,k}=\frac{2}{\sqrt{\tfrac14+\gamma_{E,k}^2}},
\end{equation}
and
\begin{equation}
  \boldsymbol\gamma_Eu
  =(\gamma_{E,1}u,\ldots,\gamma_{E,M}u)\pmod{2\pi}.
\end{equation}
Hence $Z_{E,M}$ is an observable along the Kronecker flow
\begin{equation}
  \varphi_u:\T^M\longrightarrow\T^M,
  \qquad
  \varphi_u(\boldsymbol\phi)
  =\boldsymbol\phi+u\boldsymbol\gamma_E.
  \label{eq:Kronecker-flow}
\end{equation}
If the ordinates are linearly independent over $\Q$, this flow is uniquely ergodic.  For a regular value $c$ of $\Phi_{E,M}$, the level set
\begin{equation}
  \Sigma_{E,M}(c)
  =\{\boldsymbol\phi\in\T^M:
  \Phi_{E,M}(\boldsymbol\phi)=c\}
  \label{eq:crossing-hypersurface}
\end{equation}
is a codimension-one submanifold.  Crossings of the truncated continuous race are intersections of the orbit of \eqref{eq:Kronecker-flow} with this hypersurface.

\begin{proposition}[Morse structure of the truncated phase observable]
\label{prop:phase-morse}
The function $\Phi_{E,M}$ is Morse on $\T^M$.  Its $2^M$ critical points are indexed by sign vectors $\boldsymbol\varepsilon\in\{\pm1\}^M$, with critical values
\begin{equation}
  c_{\boldsymbol\varepsilon}
  =\sum_{k=1}^{M}\varepsilon_kA_{E,k}.
  \label{eq:Morse-critical-values}
\end{equation}
The Morse index is the number of coordinates for which $\varepsilon_k=+1$.  Consequently, regular level sets are smooth hypersurfaces, and the diffeomorphism type and homology of the sublevel sets
\begin{equation}
  \{\boldsymbol\phi:\Phi_{E,M}(\boldsymbol\phi)\le c\}
\end{equation}
can change only when $c$ crosses a signed amplitude sum \eqref{eq:Morse-critical-values}.
\end{proposition}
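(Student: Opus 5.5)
The plan is to exploit the fact that $\Phi_{E,M}$ is a sum of functions of one variable each. Write $\Phi_{E,M}(\boldsymbol\phi)=\sum_{k}f_k(\phi_k)$ with $f_k(\phi)=A_{E,k}\cos(\phi+\delta_{E,k})$ on the circle $\T^1$. For a separable function the gradient is $(f_1'(\phi_1),\ldots,f_M'(\phi_M))$ and the Hessian is diagonal with entries $f_k''(\phi_k)$. The whole statement therefore reduces to the one-dimensional analysis of each $f_k$, followed by standard Morse theory \cite{Milnor1963}.

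\emph{Step 1 (critical points).} We have $f_k'(\phi)=-A_{E,k}\sin(\phi+\delta_{E,k})$, and $A_{E,k}=2/\sqrt{\tfrac14+\gamma_{E,k}^2}>0$. So $f_k'$ vanishes on $\T^1$ at exactly two points, $\phi+\delta_{E,k}\in\{0,\pi\}$. The critical set of $\Phi_{E,M}$ is the product of these pairs, which gives $2^M$ points. Label the point with $\phi_k+\delta_{E,k}=0$ by $\varepsilon_k=+1$ and the point with $\phi_k+\delta_{E,k}=\pi$ by $\varepsilon_k=-1$. Since $\cos$ equals $\varepsilon_k$ there, the critical value is $\sum_k\varepsilon_kA_{E,k}$, which is \eqref{eq:Morse-critical-values}.

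\emph{Step 2 (nondegeneracy and index).} At such a point $f_k''=-A_{E,k}\cos(\phi_k+\delta_{E,k})=-\varepsilon_kA_{E,k}\neq0$. The Hessian is therefore diagonal with nonzero entries, so every critical point is nondegenerate and $\Phi_{E,M}$ is Morse. The index counts the negative eigenvalues, which are exactly the coordinates with $\varepsilon_k=+1$. As a sanity check, the all-plus point is the maximum, with index $M$ and value $\sum A_{E,k}$.

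\emph{Step 3 (topological consequences).} For a regular value $c$, the implicit function theorem, or the regular value theorem, makes $\Sigma_{E,M}(c)$ a smooth closed codimension-one submanifold of the compact manifold $\T^M$. For the sublevel sets, take $a<b$ with $[a,b]$ free of critical values. Flowing along $-\nabla\Phi/|\nabla\Phi|^2$ is well defined on the compact set $\Phi^{-1}[a,b]$, so \cite[Thm.~3.1]{Milnor1963} gives a diffeomorphism $\{\Phi\le b\}\cong\{\Phi\le a\}$. Hence the diffeomorphism type and the homology can change only at the values $c_{\boldsymbol\varepsilon}$.

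The only point needing care, which I expect to be the main (mild) obstacle, is that distinct sign vectors may share a critical value: some $\sum\varepsilon_kA_{E,k}$ may coincide, or an amplitude may repeat because of a multiple ordinate. This affects neither the Morse property, which is pointwise nondegeneracy, nor the claim, which only asserts that changes occur at most at these values. I would remark that when several critical points lie on one level, the attached cells are added simultaneously, and no genericity assumption is needed.
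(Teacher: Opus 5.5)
Your proof is correct and follows essentially the same route as the paper's. Separability gives $2^M$ sign-vector critical points with a diagonal Hessian whose entries are $-\varepsilon_k A_{E,k}\neq 0$, and standard Morse theory, with simultaneous handle attachments where critical values coincide, gives the sublevel-set statement. Your version only makes explicit the regular-value theorem and Milnor's deformation lemma, which the paper invokes collectively as ``standard Morse theory.''
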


\begin{proof}
At a critical point, each $\sin(\phi_k+\delta_{E,k})$ vanishes, so $\cos(\phi_k+\delta_{E,k})=\varepsilon_k\in\{\pm1\}$.  The Hessian is diagonal with entries $-A_{E,k}\varepsilon_k$, all nonzero.  Hence every critical point is nondegenerate, its value is \eqref{eq:Morse-critical-values}, and its index is the number of negative Hessian entries.  Standard Morse theory gives the assertion about the sublevel filtration, allowing simultaneous handle attachments when different sign vectors have the same critical value.
\end{proof}

\begin{proposition}[Finite-dimensional level-set local time]
\label{prop:torus-local-time}
Assume that $\gamma_{E,1},\ldots,\gamma_{E,M}$ are linearly independent over $\Q$, and let $c$ be a regular value of $\Phi_{E,M}$.  With normalized Haar measure $d\boldsymbol\phi$ on $\T^M$,
\begin{align}
 &\lim_{\varepsilon\downarrow0}
  \lim_{U\to\infty}
  \frac{1}{2\varepsilon U}
  \int_0^U
  \mathbf 1_{\{|Z_{E,M}(u)-c|<\varepsilon\}}\,du
  \notag\\
 &\hspace{6em}
  =\int_{\Sigma_{E,M}(c)}
  \frac{d\sigma_c(\boldsymbol\phi)}
  {|\nabla\Phi_{E,M}(\boldsymbol\phi)|},
  \label{eq:torus-coarea-local-time}
\end{align}
where $d\sigma_c$ is the hypersurface measure induced by normalized Haar measure.
\end{proposition}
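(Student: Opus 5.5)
The plan has two stages: first remove the time average using unique ergodicity, then remove the $\varepsilon$-limit using the coarea formula on $\T^M$. By \eqref{eq:truncated-phase-field}, $Z_{E,M}(u)=\Phi_{E,M}(\varphi_u(\mathbf 0))$, so the inner time integral is a Birkhoff average along the Kronecker orbit of $\mathbf 0$. The observable is the indicator of the open slab
\[
  S_\varepsilon(c)=\{\boldsymbol\phi:|\Phi_{E,M}(\boldsymbol\phi)-c|<\varepsilon\}.
\]

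\emph{Step 1 (time average).} Under the LI assumption the flow \eqref{eq:Kronecker-flow} is uniquely ergodic with invariant measure normalized Haar. Hence for every continuous $f$ on $\T^M$ the time averages converge to $\int f\,d\boldsymbol\phi$ for every starting point, in particular from $\mathbf 0$. The indicator of $S_\varepsilon(c)$ is not continuous, so we extend this to Jordan-measurable sets, i.e.\ sets whose boundary has Haar measure zero.

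\begin{itemize}
  \item The boundary of the slab lies in $\Sigma_{E,M}(c-\varepsilon)\cup\Sigma_{E,M}(c+\varepsilon)$.
  \item For $\varepsilon$ small, $c\pm\varepsilon$ are regular values: the critical values \eqref{eq:Morse-critical-values} form a finite set by \cref{prop:phase-morse}, and $c$ is regular. So these level sets are smooth hypersurfaces and have measure zero.
  \item Sandwich the indicator between continuous functions $g_-\le\mathbf 1_{S_\varepsilon}\le g_+$ with $\int(g_+-g_-)$ arbitrarily small (Urysohn on neighbourhoods of the boundary).
\end{itemize}

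This gives
\[
  \lim_{U\to\infty}\frac1U\int_0^U \mathbf 1_{\{|Z_{E,M}(u)-c|<\varepsilon\}}\,du=\operatorname{Haar}\bigl(S_\varepsilon(c)\bigr).
\]

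\emph{Step 2 (coarea and the $\varepsilon$-limit).} Apply the coarea formula to the smooth function $\Phi_{E,M}$:
\[
  \operatorname{Haar}(S_\varepsilon(c))=\int_{c-\varepsilon}^{c+\varepsilon}\rho(t)\,dt,
  \qquad
  \rho(t)=\int_{\Sigma_{E,M}(t)}\frac{d\sigma_t}{|\nabla\Phi_{E,M}|}.
\]
The set of critical points has Haar measure zero, so it contributes nothing. It then suffices to show $\rho$ is continuous at $c$. Dividing by $2\varepsilon$ and letting $\varepsilon\downarrow0$ then yields \eqref{eq:torus-coarea-local-time}.

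\emph{Main obstacle.} The one genuine point is the continuity of $\rho$ near a regular value. By \cref{prop:phase-morse} the critical set is finite, hence compact, so $|\nabla\Phi_{E,M}|\ge\eta>0$ on a neighbourhood $\Phi^{-1}_{E,M}([c-\delta,c+\delta])$.
\begin{itemize}
  \item On this compact region, the flow of $\nabla\Phi/|\nabla\Phi|^2$ gives diffeomorphisms $\Sigma_{E,M}(c)\to\Sigma_{E,M}(t)$ depending smoothly on $t$.
  \item This is the Ehresmann/Morse-lemma trivialization, valid because no critical value lies in $[c-\delta,c+\delta]$.
  \item Pulling back $d\sigma_t/|\nabla\Phi|$ by these diffeomorphisms gives a smooth family of densities on a fixed compact manifold.
\end{itemize}
Hence $\rho$ is continuous, indeed smooth, on $(c-\delta,c+\delta)$. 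The same bound $\eta$ also justifies Step 1 uniformly for all small $\varepsilon$, so the order of the two limits causes no difficulty.
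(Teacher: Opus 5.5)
Your proposal is correct and follows the paper's proof. The paper likewise extends unique ergodicity of the Kronecker flow to the slab $\{|\Phi_{E,M}-c|<\varepsilon\}$ through continuous upper and lower approximations, using that its boundary is Haar-null because $c\pm\varepsilon$ are regular, and then applies the coarea formula and lets $\varepsilon\downarrow0$. You also make explicit two points the paper leaves implicit: finiteness of the critical values in \cref{prop:phase-morse} makes $c\pm\varepsilon$ regular for every small $\varepsilon$, not just almost every one, and the gradient-flow trivialization gives continuity of the level density $\rho(t)$ at $c$, which is what the final $\varepsilon\downarrow0$ step needs.
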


\begin{proof}
For almost every sufficiently small $\varepsilon>0$, the two boundary levels $c\pm\varepsilon$ are regular, so the boundary of the strip $|\Phi_{E,M}-c|<\varepsilon$ has Haar measure zero.  Unique ergodicity, applied through continuous upper and lower approximations (equivalently, the portmanteau theorem), identifies the long-time occupation proportion with the Haar measure of the strip.  The coarea formula, followed by $\varepsilon\downarrow0$, gives \eqref{eq:torus-coarea-local-time}.
\end{proof}

This is a rigorous finite-dimensional local-time interpretation.  It concerns continuous logarithmic time $u$ and a fixed truncation $M$.  It does not by itself justify sampling only at $u=\log q$, shrinking the window at the prime-dependent rate in \eqref{eq:crossing-criterion}, or passing to the full infinite zero set.  Those are exactly the additional arithmetic and analytic difficulties isolated in \cref{sec:crossings}.  \Cref{tab:topology-crossing-dictionary} records the correspondence term by term, together with the point at which each entry stops being unconditional.

\begin{table}[H]
\centering
\small
\caption{Dictionary between the continuous phase flow and the arithmetic crossing problem.}
\label{tab:topology-crossing-dictionary}
\begin{tabularx}{\textwidth}{@{}P{4.7cm}>{\raggedright\arraybackslash}X@{}}
\toprule
Continuous torus formulation & Prime-sampled arithmetic formulation \\
\midrule
Orbit $\boldsymbol\phi+u\boldsymbol\gamma_E$ & Samples at $u=\log q$ \\
Regular hypersurface $\Sigma_{E,M}(c)$ & Reference level for the elliptic race \\
Fixed strip $|\Phi_{E,M}-c|<\varepsilon$ & Frobenius tube $|B_E^*(q)-c|<T_E(q)$ \\
Width $\varepsilon$ chosen independently & Width $T_E(q)=|a_q(E)|\log q/(2q)$ \\
Unoriented occupation measure & Orientation supplied by $\operatorname{sgn}a_q(E)$ \\
Coarea/unique-ergodicity theorem & Prime-sampled shrinking-target transfer (open) \\
\bottomrule
\end{tabularx}
\end{table}

\subsection{The controlled topological synthesis}
\label{subsec:topological-synthesis}

The legitimate topological content of the framework may therefore be summarized as
\begin{equation}
  \boxed{
  \begin{gathered}
  \operatorname{Frob}_q\text{ on }H^1_{\mathrm{\acute et}}
  \longrightarrow a_q(E)
  \longrightarrow\text{prime-side arithmetic},\\
  \text{family symmetry}
  \longrightarrow SO(\mathrm{odd})\text{ hard edge}
  \longrightarrow\text{low-zero statistics},\\
  \{\gamma_{E,k}\}_{k\le M}
  \longrightarrow\text{Kronecker flow on }\T^M
  \longrightarrow\text{level-set crossings and local time}.
  \end{gathered}}
  \label{eq:topological-synthesis}
\end{equation}
The three lines are exactly the topological block \textbf{(T1)}--\textbf{(T3)} of \cref{tab:epistemic-dictionary}.  The first line is local and cohomological, the second is a family-level symmetry model over $\Q$ and a monodromy theorem in appropriate function-field settings, and the third is a dynamical reformulation of the truncated explicit formula.  This separation permits a genuinely topological reading of the ECRH--BSD framework without identifying global zeros with the eigenvalues of a fixed cohomology group or treating a probabilistic reweighting as a deformation of topology.

\section{Exact zero-product decomposition}
\label{sec:hadamard}

\paragraph{Literature.}
The Hadamard factorization of entire functions of order one, and the Riemann--von Mangoldt count $N_E(T)=O(T\log T)$ used to fix the genus, are standard \cite{IwaniecKowalski2004}; entirety of $\Lambda_E$ is modularity \cite{Wiles1995,BCDT2001}.  The BSD conjecture in the form used below is stated in \cite{BSD1965,Tate1966} and reviewed in \cite{Wiles2006}; finiteness of $\ShaE(E)$ in the analytic rank at most one case follows from Gross--Zagier \cite{GrossZagier1986} and Kolyvagin \cite{Kolyvagin1988}, and squareness of its order under the Cassels--Tate hypotheses from \cite{Cassels1965,PoonenStoll1999}.

Define
\begin{equation}
  F_E(z)=\frac{\Lambda_E(1+z)}{z^r}.
  \label{eq:F}
\end{equation}
The functional equation implies $F_E(-z)=F_E(z)$.  The function is even, entire, and of order one.  Since the zero-counting function is $O(T\log T)$, one has
\begin{equation}
  \sum_{\gamma_E>0}\gamma_E^{-2}<\infty,
\end{equation}
so pairing the zeros at $\alpha$ and $-\alpha$ gives a genus-zero product in $z^2$.

\begin{theorem}[Central leading coefficient and zero product]
Define
\begin{equation}
  J_E=\log\left|\frac{F_E(1/2)}{F_E(0)}\right|.
  \label{eq:J-unconditional}
\end{equation}
Then
\begin{equation}
  \boxed{
  \log\left|\frac{L^{(r)}(E,1)}{r!}\right|
  =\frac14\log N_E
   +\left(r-\frac32\right)\log2
   +\log L\!\left(E,\frac32\right)-J_E.
  }
  \label{eq:hadamard-identity}
\end{equation}
Under ECRH,
\begin{equation}
  \boxed{
  J_E=\sum_{\gamma_E>0}
  \log\!\left(1+\frac{1}{4\gamma_E^2}\right).
  }
  \label{eq:J-ECRH}
\end{equation}
\end{theorem}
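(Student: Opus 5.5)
The plan is to split the statement into two independent parts: the boxed identity \eqref{eq:hadamard-identity}, which is a direct evaluation of $F_E$ at $z=0$ and $z=1/2$ using \eqref{eq:completed}, and the zero-sum form \eqref{eq:J-ECRH}, which comes from the Hadamard product of the even entire function $F_E$.

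\emph{Step 1 (unconditional identity).} Since $r=\ord_{s=1}L(E,s)$, the limit $F_E(0)=\lim_{z\to0}\Lambda_E(1+z)/z^r$ exists and is nonzero. From \eqref{eq:completed} and $\Gamma(1)=1$,
\[
F_E(0)=N_E^{1/2}(2\pi)^{-1}\,\frac{L^{(r)}(E,1)}{r!}.
\]
At $z=1/2$, using $\Gamma(3/2)=\sqrt\pi/2$, we get
\[
F_E(1/2)=2^r\,\Lambda_E(3/2)=2^r N_E^{3/4}(2\pi)^{-3/2}\,\frac{\sqrt\pi}{2}\,L(E,3/2).
\]
Taking the logarithm of the absolute value of the ratio, the constants combine as
\[
(2\pi)^{-3/2}\frac{\sqrt\pi}{2}\Big/(2\pi)^{-1}=2^{-3/2}.
\]
Hence
\[
J_E=\tfrac14\log N_E+\bigl(r-\tfrac32\bigr)\log2+\log|L(E,3/2)|-\log\bigl|L^{(r)}(E,1)/r!\bigr|,
\]
and rearranging gives \eqref{eq:hadamard-identity}. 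I would remark that $L(E,3/2)\neq0$, since $\Lambda_E$ has no zeros on the edge $\Re s=3/2$ of the critical strip. In fact $L(E,3/2)>0$ by the standard nonvanishing on that line together with positivity for real $s>3/2$, although only the absolute value is needed.

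\emph{Step 2 (Hadamard product).} $F_E$ is entire of order one, since $\Lambda_E$ is entire by modularity. Its zeros are $z=\rho-1$ for the nontrivial zeros $\rho\neq1$, counted with multiplicity. Hadamard's theorem gives
\[
F_E(z)=e^{a+bz}\prod_\alpha\Bigl(1-\frac z\alpha\Bigr)e^{z/\alpha}.
\]
By evenness the zero multiset is symmetric under $\alpha\mapsto-\alpha$ with equal multiplicities. Pairing $\alpha$ with $-\alpha$ cancels the exponential factors and yields $\prod(1-z^2/\alpha^2)$. This product converges absolutely and locally uniformly because $\sum|\alpha|^{-2}<\infty$, which follows from the $O(T\log T)$ zero count. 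Comparing $F_E(z)=F_E(-z)$ then forces $e^{bz}=e^{-bz}$, so $b=0$, and $e^{a}=F_E(0)$.

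\emph{Step 3 (apply ECRH).} Under ECRH we have $\alpha=\pm i\gamma_E$ with $\gamma_E>0$. The zero ordinate $\gamma_E=0$ is excluded, because the central zero was removed in \eqref{eq:F}. Each pair contributes the factor $1+z^2/\gamma_E^2$, so
\[
\frac{F_E(1/2)}{F_E(0)}=\prod_{\gamma_E>0}\Bigl(1+\frac1{4\gamma_E^2}\Bigr).
\]
Every factor is positive, and the product converges, so taking logarithms termwise gives \eqref{eq:J-ECRH}.

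The only delicate point is Step 2: justifying that the exponential factor disappears and that the product can be regrouped in pairs. I would handle it exactly as above: absolute convergence of the paired product (genus zero in $z^2$) permits the regrouping, and evenness then forces $b=0$. Everything else is bookkeeping of Gamma factors.
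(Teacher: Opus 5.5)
Your proposal is correct and follows the paper's proof. You evaluate $F_E$ at $z=0$ and $z=1/2$ through the gamma factors, so that the constants collapse to $2^{r-3/2}N_E^{1/4}$, and you then evaluate the paired genus-zero Hadamard product $\prod_{\gamma_E>0}(1+z^2/\gamma_E^2)$ at $z=1/2$ under ECRH; your removal of the $e^{bz}$ factor by evenness and your appeal to edge nonvanishing to write $\log L(E,3/2)$ without absolute value fill in points the paper leaves implicit.
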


\begin{proof}
At the origin,
\begin{equation}
  F_E(0)=N_E^{1/2}(2\pi)^{-1}
  \frac{L^{(r)}(E,1)}{r!},
\end{equation}
whereas
\begin{equation}
  F_E(1/2)=2^r\Lambda_E(3/2).
\end{equation}
Since $\Gamma(3/2)=\sqrt\pi/2$,
\begin{equation}
  \left|\frac{L^{(r)}(E,1)}{r!}\right|
  =2^{r-3/2}N_E^{1/4}|L(E,3/2)|\e^{-J_E}.
  \label{eq:multiplicative-identity}
\end{equation}
Under ECRH,
\begin{equation}
  \frac{F_E(z)}{F_E(0)}
  =\prod_{\gamma_E>0}
   \left(1+\frac{z^2}{\gamma_E^2}\right),
\end{equation}
and evaluation at $z=1/2$ gives \eqref{eq:J-ECRH}.
\end{proof}

For rank one,
\begin{equation}
  \log|L'(E,1)|
  =\frac14\log N_E-\frac12\log2
   +\log|L(E,3/2)|-J_E.
  \label{eq:rank-one}
\end{equation}
Any residual formed by moving these terms to one side vanishes identically.

\section{The endpoint functional and the nonlinear zero remainder}
\label{sec:endpoint}

\paragraph{Literature.}
The even Taylor coefficients of $\log\bigl(F_E(z)/F_E(0)\bigr)$ at the origin are the power sums $\sigma_{E,n}=\sum_{\gamma_E>0}\gamma_E^{-2n}$, that is, the Li coefficients of $L(E,s)$ up to normalization.  Li's criterion \cite{Li1997}, its complements \cite{BombieriLagarias1999}, its Selberg-class form \cite{OmarMazhouda2007} and its automorphic form \cite{Lagarias2007} are therefore the natural context for the functionals $J_E$, $\mathscr V_E$ and $\Delta_E$ introduced here.  What is new below is not the expansion but the fact that a single interior evaluation at $z=1/2$ separates cleanly into $V_E/8$ and a positive remainder.  The boundary value $L(E,3/2)$ sits at the edge of the region of absolute convergence, so its distribution belongs to the circle of edge-value problems studied by Granville and Soundararajan \cite{GranvilleSoundararajan2003}; nonvanishing on that edge is \cite{JacquetShalika1976}.

Let
\begin{equation}
  \Phi_E(t)=\log\frac{F_E(t)}{F_E(0)}
\end{equation}
whenever the real segment from $0$ to $t$ contains no zero of $F_E$.  Define the displaced logarithmic-derivative functional
\begin{equation}
  \mathscr V_E(t)
  =2\Phi_E'(t)
  =2\frac{\Lambda_E'}{\Lambda_E}(1+t)-\frac{2r}{t}.
  \label{eq:displaced-V}
\end{equation}
Then
\begin{equation}
  \boxed{
  J_E=\frac12\int_0^{1/2}\mathscr V_E(t)\,dt.
  }
  \label{eq:integral-identity}
\end{equation}
Under ECRH,
\begin{equation}
  \mathscr V_E(t)
  =4t\sum_{\gamma_E>0}\frac{1}{t^2+\gamma_E^2},
  \qquad
  \mathscr V_E(1/2)=V_E.
  \label{eq:Vt-ECRH}
\end{equation}
Thus a regression of $J_E$ on $V_E$ is a one-point quadrature rule: it compares an integral with the endpoint of its integrand.

Set
\begin{equation}
  j(\gamma)=\log\!\left(1+\frac{1}{4\gamma^2}\right),
  \qquad
  v(\gamma)=\frac{8}{1+4\gamma^2}.
  \label{eq:kernels}
\end{equation}

\begin{proposition}[Exact moment decomposition of $J_E$ and $V_E$]
\label{prop:Delta-decomposition}
Assume ECRH and put
\begin{equation}
  b_\gamma=\frac{1}{1+4\gamma^2},
  \qquad
  S_k(E)=\sum_{\gamma_E>0}b_\gamma^k.
\end{equation}
Then
\begin{equation}
  \boxed{
  V_E=8S_1(E),\qquad
  J_E=\sum_{k\ge1}\frac{S_k(E)}{k}
  =\frac18V_E+\Delta_E,
  }
  \label{eq:Delta-decomposition}
\end{equation}
where
\begin{equation}
  \boxed{
  \Delta_E:=J_E-\frac18V_E
  =\sum_{k\ge2}\frac{S_k(E)}{k}>0.
  }
  \label{eq:Delta-statistic}
\end{equation}
The same identities hold for every finite zero truncation.
\end{proposition}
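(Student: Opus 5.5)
The proof is a termwise power-series expansion of the kernel $j(\gamma)$ in the variable $b_\gamma$, followed by an interchange of summation justified by nonnegativity. We start from the ECRH form of $J_E$ in \eqref{eq:J-ECRH} and rewrite each summand in terms of $b_\gamma$:
\[
1+\frac{1}{4\gamma^2}=\frac{1+4\gamma^2}{4\gamma^2}=\frac{1}{1-b_\gamma},
\qquad\text{so}\qquad
j(\gamma)=-\log(1-b_\gamma).
\]
Since $\gamma>0$, we have $0<b_\gamma<1$. The Mercator series therefore converges and gives $j(\gamma)=\sum_{k\ge1}b_\gamma^k/k$.

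The identity $V_E=8S_1(E)$ is immediate from \eqref{eq:variance}, because
\[
\frac{2}{\tfrac14+\gamma^2}=\frac{8}{1+4\gamma^2}=8b_\gamma,
\]
which is exactly the kernel $v(\gamma)$ in \eqref{eq:kernels}.

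Next we sum $j(\gamma_E)=\sum_{k}b_{\gamma_E}^k/k$ over all positive ordinates and exchange the order of summation. All terms are nonnegative, so Tonelli's theorem for double series justifies the interchange with no further estimates. This yields $J_E=\sum_{k\ge1}S_k(E)/k$.

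Every $S_k$ is finite, since $S_k\le S_1$ and $S_1\le\frac14\sum\gamma_E^{-2}<\infty$ by the genus-zero bound recorded in \cref{sec:hadamard}. Multiplicities cause no difficulty because zeros are summed with multiplicity throughout. Separating the $k=1$ term gives $J_E=S_1+\sum_{k\ge2}S_k/k=\tfrac18V_E+\Delta_E$.

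Strict positivity of $\Delta_E$ follows once $S_2(E)>0$, which requires at least one noncentral zero. Hadamard theory supplies this: $F_E$ is entire of order one and even, and it is not a polynomial, because $\Lambda_E$ decays rapidly along vertical lines (through $\Gamma(s)$) while not vanishing identically. A zero-free order-one function of this form would be $e^{a+bz}$, and evenness forces it to be constant. That contradicts $\Lambda_E(1+z)=z^rF_E(z)$ being $L$-function-sized, for example via its growth as $z\to+\infty$ on the real axis, where $\Lambda_E$ grows like $\Gamma$.

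Alternatively, one can simply quote the standard fact that $N_E(T)\sim\frac{T}{\pi}\log T$, so that infinitely many zeros exist. This is the only step that needs input beyond algebra, and it is where I expect the main (minor) obstacle to lie.

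For a finite truncation the same computation applies verbatim, with finite sums replacing the infinite ones. Strict positivity then holds as soon as the truncation contains at least one ordinate.
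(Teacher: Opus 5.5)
Your proof is correct and essentially matches the paper's: you write $j(\gamma)=-\log(1-b_\gamma)$ and $v(\gamma)=8b_\gamma$, expand in the Mercator series, and interchange the zero and power sums by Tonelli using nonnegativity, with finiteness coming from $S_1(E)=V_E/8<\infty$. Your extra Hadamard argument that $F_E$ has at least one zero, which gives $S_2(E)>0$ and hence strict positivity of $\Delta_E$, is valid and makes explicit a point the paper's proof takes for granted.
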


\begin{proof}
The kernels satisfy $v(\gamma)=8b_\gamma$ and
$j(\gamma)=-\log(1-b_\gamma)=\sum_{k\ge1}b_\gamma^k/k$.  All terms are nonnegative, so Tonelli's theorem permits the interchange of the zero and power sums.  The convergence follows from $S_1(E)=V_E/8<\infty$ and $b_\gamma\le b_{\gamma_{E,1}}<1$.
\end{proof}

\begin{theorem}[First-zero ratio bracket]
Assume ECRH and let $\gamma_{E,1}$ be the first positive zero ordinate.  Put
\begin{equation}
  x_1=\frac{1}{4\gamma_{E,1}^2},
  \qquad
  R(x)=\frac{(1+x)\log(1+x)}{8x}.
\end{equation}
Then
\begin{equation}
  \boxed{
  \frac18\le \frac{J_E}{V_E}\le R(x_1).
  }
  \label{eq:ratio-bracket}
\end{equation}
The upper bound is strictly decreasing with $\gamma_{E,1}$.  Moreover,
\begin{equation}
  J_E\le\frac14V_E
\end{equation}
whenever
\begin{equation}
  \gamma_{E,1}\ge\gamma_*=0.252488\ldots,
\end{equation}
where $x_*=3.921553\ldots$ solves $(1+x)\log(1+x)=2x$ and
$\gamma_*=(2\sqrt{x_*})^{-1}$.
\end{theorem}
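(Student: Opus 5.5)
The plan is to compare $J_E$ and $V_E$ zero by zero through the kernels \eqref{eq:kernels}, written in the variable $x=1/(4\gamma^2)$. Since $j(\gamma)=\log(1+x)$ and $v(\gamma)=8b_\gamma=8x/(1+x)$, each zero contributes the ratio
\begin{equation*}
  \frac{j(\gamma)}{v(\gamma)}=\frac{(1+x)\log(1+x)}{8x}=R(x).
\end{equation*}
Thus $J_E/V_E$ is a weighted average of $R(x_\gamma)$, with positive weights $v(\gamma)/V_E$ summing to one; both series converge by \cref{prop:Delta-decomposition}. The bracket \eqref{eq:ratio-bracket} therefore reduces to the range of $R$ on the set $\{x_\gamma\}$. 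Under ECRH we have $0<x_\gamma\le x_1$ for every positive ordinate, because $\gamma\ge\gamma_{E,1}$.

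The analytic core is that $R$ is strictly increasing on $(0,\infty)$ with $R(0^+)=1/8$. Put $h(x)=(1+x)\log(1+x)$. Then $8R(x)=h(x)/x$ and
\begin{equation*}
  \frac{d}{dx}\frac{h(x)}{x}=\frac{xh'(x)-h(x)}{x^2}=\frac{x-\log(1+x)}{x^2}>0,
\end{equation*}
using $h'(x)=\log(1+x)+1$. The limit $R(0^+)=1/8$ follows from $\log(1+x)\sim x$.

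This gives $1/8<R(x_\gamma)\le R(x_1)$ for each zero, and averaging yields \eqref{eq:ratio-bracket}. The lower bound $1/8$ can equivalently be read off from $\Delta_E>0$ in \cref{prop:Delta-decomposition}. Because $x_1$ is strictly decreasing in $\gamma_{E,1}$ and $R$ is strictly increasing, the upper bound $R(x_1)$ is strictly decreasing in $\gamma_{E,1}$.

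For the final claim, $J_E\le V_E/4$ follows whenever $R(x_1)\le 1/4$, that is, whenever $h(x_1)\le 2x_1$. The function $h(x)-2x$ has derivative $\log(1+x)-1$. It is therefore decreasing on $(0,e-1)$ and increasing afterwards. It starts at $0$ and tends to $+\infty$, so it has a unique positive root $x_*$. At that root $R(x_*)=1/4$, and by monotonicity of $R$ we get $R(x)\le1/4$ exactly when $x\le x_*$, that is, when $\gamma_{E,1}\ge\gamma_*=(2\sqrt{x_*})^{-1}$.

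The only step that is not structural is the numerical certification $x_*=3.921553\ldots$, and hence $\gamma_*=0.252488\ldots$. I would do this by evaluating the sign of $h(x)-2x$ at two nearby rational endpoints and bisecting. The main point of care is the averaging argument itself: with infinitely many zeros, one must check that the weights are summable. This holds because $V_E<\infty$, which also keeps the inequality strict at the lower end.
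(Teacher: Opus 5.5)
Your proof is correct and follows the same route as the paper: write $j(\gamma)/v(\gamma)=R(x)$, show $R$ is increasing from the sign of $x-\log(1+x)$, and bound the $v$-weighted mean $J_E/V_E$ between $1/8$ and the ratio at $\gamma_{E,1}$. Your analysis of $h(x)-2x$ (a unique positive root and a sign change there) also supplies the justification for the $\gamma_*$ threshold, which the paper leaves implicit.
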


\begin{proof}
With $x=(4\gamma^2)^{-1}$,
\begin{equation}
  \frac{j(\gamma)}{v(\gamma)}=R(x).
\end{equation}
The derivative of $R$ has the sign of $x-\log(1+x)>0$, so $R$ increases with $x$ and decreases with $\gamma$, with limit $1/8$ as $\gamma\to\infty$.  Since $J_E/V_E$ is the $v(\gamma_E)$-weighted mean of the ratios $j(\gamma_E)/v(\gamma_E)$, it lies between their infimum and their maximum, the latter occurring at $\gamma_{E,1}$.
\end{proof}

The lower bound in the theorem is also immediate from the positive remainder \eqref{eq:Delta-statistic}.  The upper bracket adds the first-zero control and sharpens, rather than removes, the near-central obstruction.  As $\gamma_{E,1}\to0$,
\begin{equation}
  j(\gamma_{E,1})\sim-2\log(2\gamma_{E,1})\to\infty,
  \qquad
  v(\gamma_{E,1})\to8.
\end{equation}
Thus no uniform affine law $J_E=a+bV_E+o(1)$ can cover the extreme tail, while the ratio bracket remains valid but its upper endpoint degenerates.  A regression slope is not the ratio $J_E/V_E$: the former is a covariance-weighted average of derivative ratios and can lie outside the pointwise ratio interval suggested by a typical first zero.

At $t=1/2$, \eqref{eq:displaced-V} also gives the exact identity
\begin{equation}
  \boxed{
  V_E=\log N_E-2\log(2\pi)
  +2\psi\!\left(\frac32\right)
  +2\frac{L'}{L}\!\left(E,\frac32\right)-4r
  }
  \label{eq:V-Euler}
\end{equation}
under ECRH, with the left side replaced by the corresponding symmetric zero functional off ECRH.

\section{Finite-conductor zero structure and random-matrix comparison}
\label{sec:numerics}

\paragraph{Literature.}
The curves, models and invariants are taken from the LMFDB \cite{LMFDB}, whose elliptic-curve tables follow the algorithms of Cremona \cite{Cremona1997}.  The values $L(E,3/2)$ and $L'(E,1)$ are computed by the smoothed incomplete-gamma method of Dokchitser \cite{Dokchitser2004}; the practical aspects of evaluating $L$-functions and their zeros are surveyed by Rubinstein \cite{RubinsteinLNS2005}.  The random-matrix side uses the Weyl integration formula for the classical compact groups \cite{Weyl1939}, in the form given in \cite{Mehta2004,Forrester2010}, and the symmetry-type predictions of \cite{KatzSarnak1999,KatzSarnak1999BAMS}.

\subsection{Fixed conductor: the exact nonlinear remainder and the first-zero kernel arc}

\paragraph{Literature.}
The isogeny classes, integral models and analytic ranks are those of the LMFDB \cite{LMFDB}.  That the lowest ordinate dominates statistics of this kind at finite conductor is the phenomenon reported by Miller \cite{Miller2006} and analysed by Marshall \cite{Marshall2013}; here it is a consequence of the exact decomposition rather than an empirical observation.

For seventeen rank-one isogeny classes at $N_E=50700$, using all positive zeros through height $20$, define
\begin{equation}
  \Delta_{E,20}=J_{E,20}-\frac18V_{E,20}.
\end{equation}
The exact decomposition \eqref{eq:Delta-decomposition} gives
\begin{equation}
  \operatorname{mean}\Delta_{E,20}=0.035617,
  \qquad
  0.017216\le\Delta_{E,20}\le0.131512.
  \label{eq:Delta-50700-range}
\end{equation}
If
\begin{equation}
  \Delta_{E,1}=j(\gamma_{E,1})-\frac18v(\gamma_{E,1}),
\end{equation}
then the first zero contributes on average $82.4\%$ of $\Delta_{E,20}$, with classwise range $69.1\%$--$97.1\%$.  This is an additive positive decomposition, unlike a covariance-share calculation.

The familiar affine fits remain useful descriptions of the kernel geometry.  One obtains
\begin{equation}
  J_{E,20}=-0.14643+0.17494V_{E,20},
  \qquad R^2=0.9952,
  \label{eq:fixed-regression}
\end{equation}
and for the first zero alone,
\begin{equation}
  j(\gamma_{E,1})=-0.05243+0.17384v(\gamma_{E,1}),
  \qquad R^2=0.9964.
  \label{eq:first-zero-regression}
\end{equation}
The first ordinates lie in
\begin{equation}
  0.583\lesssim\gamma_{E,1}\lesssim1.207,
  \label{eq:first-ordinate-range}
\end{equation}
and the secant slope of the kernel arc
\begin{equation}
  \gamma\longmapsto\bigl(v(\gamma),j(\gamma)\bigr)
\end{equation}
over this interval is $0.17714$.  Removing the first zero gives
\begin{equation}
  J_{E,20}^{\mathrm{rest}}
  =-0.01653+0.13594V_{E,20}^{\mathrm{rest}},
  \qquad R^2=0.9997,
  \label{eq:rest-regression}
\end{equation}
with a slope close to $1/8$ because the higher-power remainder in \eqref{eq:Delta-decomposition} becomes small for high ordinates.

The observed truncated ratios satisfy
\begin{equation}
  0.13044\le \frac{J_{E,20}}{V_{E,20}}\le0.15010,
\end{equation}
and every class lies inside its individual first-zero bracket \eqref{eq:ratio-bracket}.  The appropriate conclusion is therefore
\begin{equation}
  \boxed{
  J_{E,20}=\frac18V_{E,20}+\Delta_{E,20},
  \quad
  \Delta_{E,20}\text{ is predominantly a first-zero statistic.}
  }
  \label{eq:first-zero-conclusion}
\end{equation}
The regression is a visual consequence of this exact decomposition, not a proposed universal affine law.  \Cref{fig:kernel-arc-50700} makes the point graphically: the seventeen pairs $(V_{E,20},J_{E,20})$ are plotted against the translated first-zero kernel arc, and the apparent linearity is accounted for entirely by the shape of $\gamma\mapsto(v(\gamma),j(\gamma))$ over the observed range \eqref{eq:first-ordinate-range} of first ordinates.

\begin{figure}[H]
  \centering
  \includegraphics[width=0.90\textwidth]{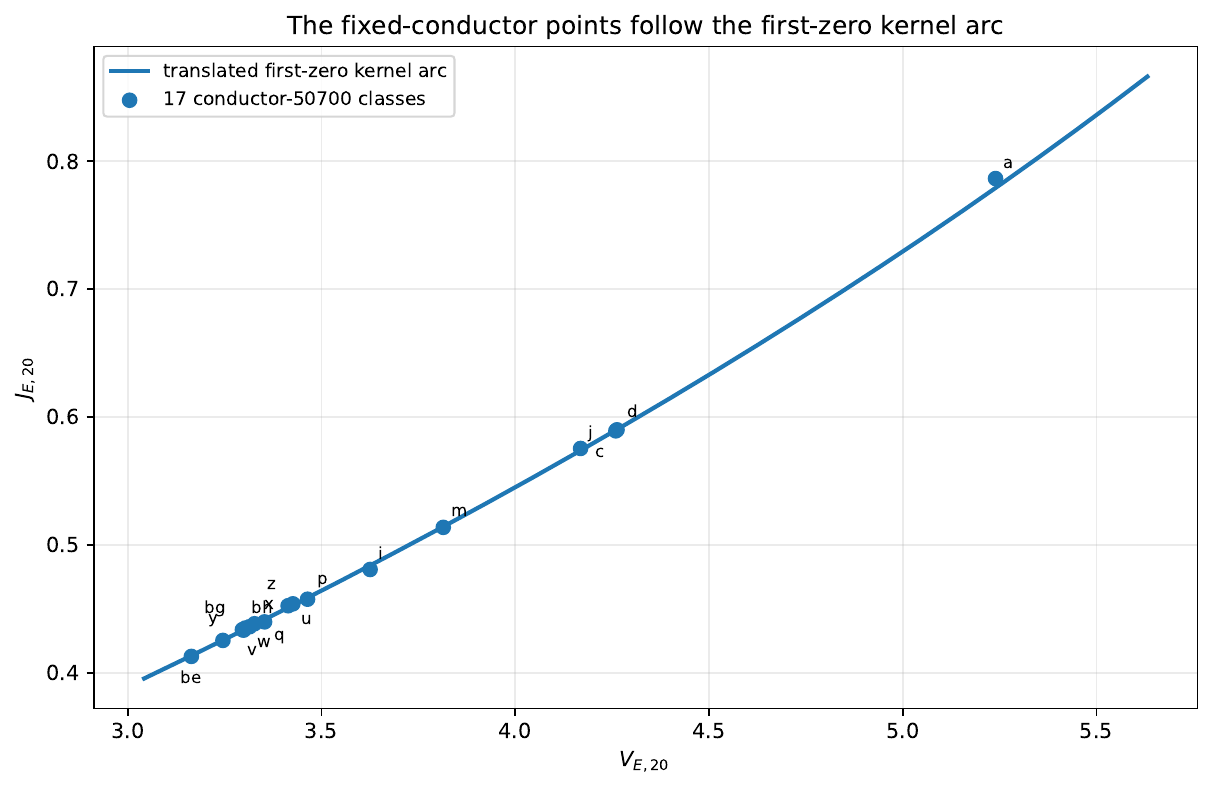}
  \caption{The seventeen total pairs $(V_{E,20},J_{E,20})$ lie close to the translated first-zero kernel arc $\gamma\mapsto(\overline V_{\rm rest}+v(\gamma),\overline J_{\rm rest}+j(\gamma))$.  Labels are the suffixes of the LMFDB isogeny-class names.  The figure displays the deterministic geometric origin of the high-$R^2$ regression.}
  \label{fig:kernel-arc-50700}
\end{figure}

\subsection{Independent edge values and the exact tail beyond height $20$}

For one representative of each isogeny class, $L(E,3/2)$ was evaluated independently from modular coefficients and the functional equation.  Representative integral Weierstrass models were taken from the LMFDB records for conductor $50700$ \cite{LMFDB}.  Put
\begin{equation}
  A=\frac{\sqrt{N_E}}{2\pi}.
\end{equation}
Splitting the Mellin integral at the fixed point of the functional equation gives, for root number $w_E=-1$ and general $s$,
\begin{equation}
\begin{split}
  \Lambda_E(s)=\sum_{n\ge1}a_n\Bigg[
  &\left(\frac{A}{n}\right)^s
  \Gamma\!\left(s,\frac{n}{A}\right)\\
  &+w_E\left(\frac{A}{n}\right)^{2-s}
  \Gamma\!\left(2-s,\frac{n}{A}\right)
  \Bigg].
\end{split}
  \label{eq:incomplete-gamma-L}
\end{equation}
The incomplete-gamma weights decay exponentially, and truncations at $1000$, $1500$, and $2000$ coefficients agree to the displayed precision.

For rank one,
\begin{equation}
  J_E=\frac14\log N_E-\frac12\log2
  +\log|L(E,3/2)|-\log L'(E,1).
  \label{eq:J-from-independent-L32}
\end{equation}
Thus the direct edge value and the independently computed central derivative determine the exact numerical tail
\begin{equation}
  R_E(20)=J_E-J_{E,20}.
\end{equation}
This is an identity-based calibration, not an independent prediction of $J_E$.  Its value is that it compares a smooth zero-density approximation with the tail required by independently evaluated $L$-data and diagnoses the completeness of the finite zero lists.  The resulting edge values and tails are collected in \cref{tab:L32-tail}.

\begin{table}[H]
\centering
\small
\caption{Independent edge values and exact numerical zero-product tails for the seventeen rank-one isogeny classes of conductor $50700$.}
\label{tab:L32-tail}
\begin{tabular}{@{}lrrrr@{}}
\toprule
Class & $L(E,3/2)$ & $J_{E,20}$ & $J_E$ & $R_E(20)$\\
\midrule
50700.a  & 0.543955 & 0.786455 & 0.816730 & 0.030274\\
50700.be & 1.209965 & 0.412732 & 0.443024 & 0.030292\\
50700.bg & 1.154219 & 0.433625 & 0.463909 & 0.030284\\
50700.bh & 1.152257 & 0.434889 & 0.465177 & 0.030288\\
50700.c  & 0.693905 & 0.589109 & 0.619388 & 0.030279\\
50700.d  & 0.697779 & 0.589945 & 0.619600 & 0.029655\\
50700.i  & 0.862426 & 0.480701 & 0.510359 & 0.029658\\
50700.j  & 0.734063 & 0.575352 & 0.605630 & 0.030278\\
50700.m  & 0.821406 & 0.513640 & 0.543921 & 0.030282\\
50700.p  & 0.953600 & 0.457446 & 0.487732 & 0.030286\\
50700.q  & 0.993867 & 0.439720 & 0.470007 & 0.030287\\
50700.u  & 1.082001 & 0.453844 & 0.484144 & 0.030300\\
50700.v  & 1.132197 & 0.433258 & 0.463549 & 0.030291\\
50700.w  & 1.129565 & 0.435927 & 0.466227 & 0.030300\\
50700.x  & 1.130764 & 0.438262 & 0.468551 & 0.030289\\
50700.y  & 1.164768 & 0.425219 & 0.455516 & 0.030296\\
50700.z  & 1.098997 & 0.452354 & 0.482638 & 0.030283\\
\bottomrule
\end{tabular}
\end{table}

The tails separate into two sharply defined groups:
\begin{center}
\small
\begin{tabular}{@{}lrrrr@{}}
\toprule
Listed zeros & Classes & Mean tail & Standard deviation & Range\\
\midrule
35 & 15 & 0.0302873 & $7.8\times10^{-6}$ & $[0.0302743,0.0302998]$\\
36 & 2  & 0.0296565 & $2.1\times10^{-6}$ & $[0.0296550,0.0296580]$\\
\bottomrule
\end{tabular}
\end{center}

\Cref{fig:tail-split-50700} shows the same tails as a scatter against the class index.  The two clusters are separated cleanly, and neither has internal spread comparable to the gap, so the apparent scatter of the tail is a zero-count effect and not numerical noise.

\begin{figure}[H]
  \centering
  \includegraphics[width=0.82\textwidth]{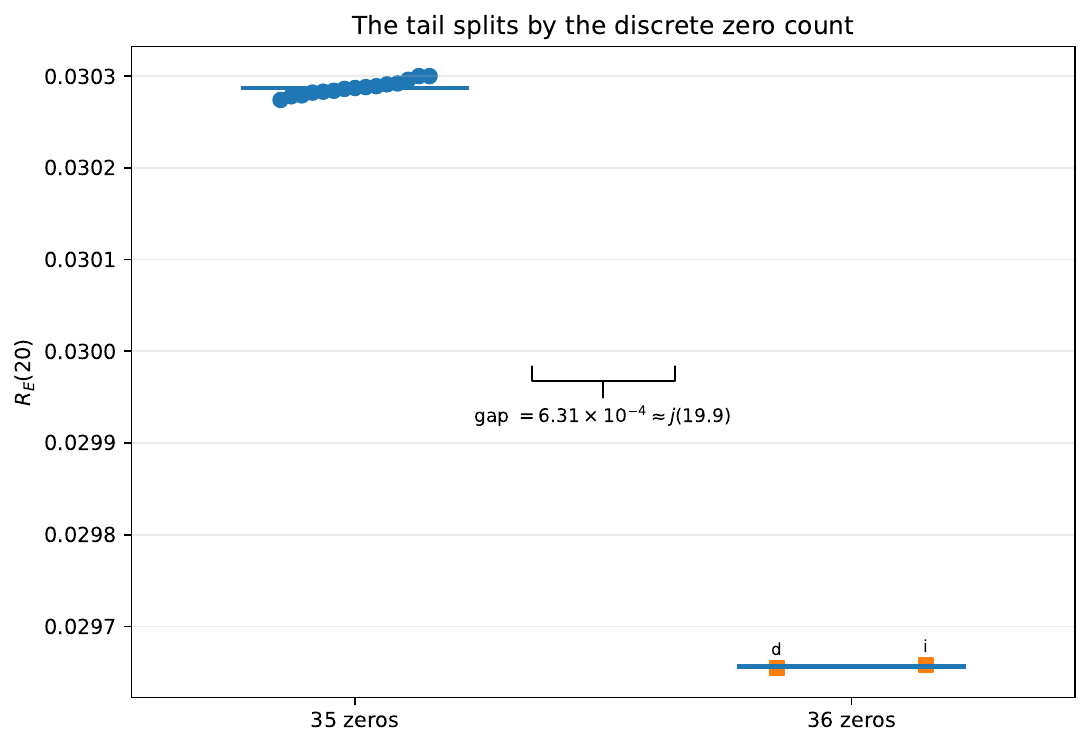}
  \caption{Exact tails beyond height $20$.  The fifteen classes with $35$ listed zeros form a narrow upper cluster; the two classes with $36$ zeros form the lower cluster.  The separation is one kernel unit near the cutoff.}
  \label{fig:tail-split-50700}
\end{figure}

The gap between the group means is $6.307\times10^{-4}$, while
\begin{equation}
  j(19.9)=6.311\times10^{-4}.
\end{equation}
The aggregate standard deviation $2.10\times10^{-4}$ therefore measures the discreteness of the zero-counting function: the two smaller tails are exactly the classes with one additional ordinate near height $20$.  Within each zero-count group the tail is almost constant.

The smooth-density estimate
\begin{equation}
  R_{\mathrm{dens}}(20)=0.0301388
\end{equation}
underestimates the exact numerical tail by about $1.49\times10^{-4}$ for the fifteen $35$-zero classes and overestimates it by about $4.82\times10^{-4}$ for the two $36$-zero classes.  In particular it is not a safe upper bound for a square-isolation argument.  A rigorous tail bound must include an explicit zero-counting remainder.  Heuristically, moving one ordinate across $T$ changes the tail by $j(T)\sim1/(4T^2)$, and the relative size of this unit compared with the leading density tail is
\begin{equation}
  \frac{j(T)}{R_{\mathrm{dens}}(T)}
  \asymp \frac{\pi}{T L_T},
  \qquad
  L_T=\log\frac{\sqrt{N_E}T}{2\pi}+1.
  \label{eq:tail-counting-unit}
\end{equation}
At $T=20$ this is approximately $2.07\%$, matching the observed $35/36$ split.  To turn this into a theorem one needs an explicit bound for the zero-counting error $S_E(T)$; the density integral alone is only a central estimate.

The numerical edge evaluation nevertheless gives a useful completeness diagnostic.  A missing ordinate near $T$ would increase the inferred tail by approximately $j(T)$, and the seventeen values agree within one such zero-counting unit after the $35/36$ split is taken into account.

\paragraph{Data provenance and reproducibility.}
The rank-one sample consists of
\begin{center}
\small
\texttt{50700.a, 50700.u, 50700.v, 50700.bg, 50700.bh, 50700.w, 50700.z,}\\
\texttt{50700.j, 50700.be, 50700.m, 50700.c, 50700.d, 50700.i, 50700.p,}\\
\texttt{50700.q, 50700.x, 50700.y}.
\end{center}
The lookup coordinate is the LMFDB conductor page
\url{https://www.lmfdb.org/EllipticCurve/Q/50700/}; the representative used for each class is the curve with suffix $1$ and the integral model recorded in the accompanying script.  The reproducibility archive contains the point-count table, regenerated ordinates, edge-value checks, figure scripts and curve-level diagnostics.

\subsection{A finite-conductor low-zero repulsion signal}

\paragraph{Literature.}
That elliptic-curve families exhibit more repulsion at the central point than the asymptotic one-level density predicts was observed numerically by Miller \cite{Miller2006} and studied by Marshall \cite{Marshall2013}; lower-order terms accounting for part of the effect are computed in \cite{HKS2009}, and the general one-level machinery is \cite{ILS2000,Miller2004}.

The smooth Weyl density
\begin{equation}
  n_N(u)=\frac1\pi\log\frac{\sqrt N u}{2\pi}
\end{equation}
is not designed to model the hard edge.  Its failure is quantitatively large for the kernel $j$, which heavily weights the smallest ordinates.  At $N=50700$,
\begin{equation}
  \int_{2\pi/\sqrt N}^{20}j(u)n_N(u)\,du=0.9996,
\end{equation}
whereas
\begin{equation}
  \frac1{17}\sum_E J_{E,20}=0.4913.
\end{equation}
The deficit is $0.5082$, or $50.8\%$ of the smooth prediction.  Its localization is shown in \cref{tab:density-bands}.

\begin{table}[H]
\centering
\small
\caption{Contribution of the naive smooth density to the predicted mean of $J_{E,20}$.}
\label{tab:density-bands}
\begin{tabular}{@{}lrr@{}}
\toprule
Ordinate band & Predicted $J$ contribution & Predicted zero count\\
\midrule
$(2\pi/\sqrt N,0.5)$ & 0.4747 & 0.309\\
$(0.5,1)$              & 0.2025 & 0.521\\
$(1,2)$                & 0.1446 & 1.262\\
$(2,5)$                & 0.1096 & 4.583\\
$(5,20)$               & 0.0682 & 28.824\\
\bottomrule
\end{tabular}
\end{table}

All seventeen classes have no positive ordinate below $0.5$; only five have an ordinate below $1$.  Thus the phantom smooth-density contribution below $0.5$, namely $0.4747$, accounts for almost the entire deficit.  The scaled first ordinates
\begin{equation}
  x_{E,1}=\frac{\gamma_{E,1}}{\pi}
  \log\frac{\sqrt N}{2\pi}
\end{equation}
have mean $1.1314$ and range $[0.6638,1.3753]$.  This is a finite sample of seventeen related isogeny classes, not a family theorem, but it makes $J_E$ a sensitive statistic for the low-zero repulsion observed in odd orthogonal elliptic families \cite{Miller2006,Marshall2013}.  Its mean and distribution should be compared with a finite-conductor $SO(\mathrm{odd})$ model rather than with the naive smooth density.

\subsection{Finite-conductor odd-orthogonal comparison and excision}
\label{sec:dhkms-tests}

Due\~nez, Huynh, Keating, Miller and Snaith introduced an excised $SO(2N)$ ensemble for even quadratic twists at finite conductor \cite{DHKMS2012}.  There the cutoff is motivated by arithmetic discretization of central values through the Waldspurger--Kohnen--Zagier formula.  The rank-one odd case has a different status: the regulator or point height enters the derivative formula and varies continuously \cite{CRSW2007}.  An odd-orthogonal derivative cutoff is therefore a formal reweighting unless its arithmetic normalization and lower envelope are specified independently.

If $A\in SO(2N+1)$ has eigenvalues
\begin{equation}
  1,\e^{\pm i\theta_1},\ldots,\e^{\pm i\theta_N},
  \qquad 0<\theta_1\le\cdots\le\theta_N\le\pi,
\end{equation}
and $P_A(z)=\det(I-zA)$, put
\begin{equation}
  D_A:=|P_A'(1)|
  =\prod_{j=1}^{N}(2-2\cos\theta_j).
  \label{eq:reduced-characteristic-derivative}
\end{equation}
Sharp and soft reweightings may be written as in \eqref{eq:sharp-odd-excision}--\eqref{eq:soft-odd-excision}; neither their cutoff nor their softness is supplied by a known odd-rank quantization theorem.
\begin{align}
  \mathcal T_{\tau}^{\mathrm{odd}}
  &=\{A\in SO(2N+1):D_A\ge\tau\},
  \label{eq:sharp-odd-excision}\\
  W_{\tau,\eta}(A)
  &=\left(1+\exp\left[-\frac{\log D_A-\log\tau}{\eta}\right]\right)^{-1}.
  \label{eq:soft-odd-excision}
\end{align}

\paragraph{Finite-conductor convention audit.}
Let
\begin{equation}
  L_E=\log\frac{\sqrt{N_E}}{2\pi}.
\end{equation}
The leading centre-density comparison suggests
\begin{equation}
  x_{E,j}=\frac{L_E}{\pi}\gamma_{E,j},
  \qquad
  x_{A,j}=\frac{N}{\pi}\theta_j,
  \qquad N=L_E.
  \label{eq:unfolding}
\end{equation}
At finite conductor this is only one convention.  The smooth elliptic count is more accurately
\begin{equation}
  N_E^+(T)
  =\frac1\pi\left[T L_E+\Im\log\Gamma(1+iT)\right]-\frac r2+S_E(T),
  \label{eq:exact-smooth-count}
\end{equation}
with a consistently chosen branch and argument remainder $S_E(T)$.  At conductor $50700$ and $T=20$, its smooth part is $35.24$, compared with the observed mean $35.12$; the leading-density integral gives $35.49$.  Because $L_E=3.579\ldots$, matching a slope, a total count, or the local central density changes the effective finite matrix dimension by $O(1)$, enough to alter the first-eigenangle law.  The gamma-factor correction is therefore not an optional decoration but part of the same convention choice as the counting function.

Once a matching rule produces a real effective dimension $N_*$, the matrix-side interpolation is fixed as follows.  Let $n=\lfloor N_*\rfloor$ and $\alpha=N_*-n$.  For the cumulative distribution $F_n^{\mathcal T}$ of any predeclared statistic $\mathcal T$ in Haar $SO(2n+1)$, define
\begin{equation}
  F_{N_*}^{\mathcal T}(x)
  =(1-\alpha)F_n^{\mathcal T}(x)
   +\alpha F_{n+1}^{\mathcal T}(x).
  \label{eq:adjacent-dimension-interpolation}
\end{equation}
This adjacent-integer mixture is a reproducible interpolation convention, not a claim that an orthogonal group of nonintegral dimension exists.  The matching rule and the interpolation are separate choices: the gamma factor helps determine $N_*$, whereas \eqref{eq:adjacent-dimension-interpolation} specifies how a nonintegral value is transferred to integer matrix ensembles.

The finite-conductor null is therefore convention-sensitive and should not be described as parameter-free until these choices are frozen.  A valid comparison must declare one primary exact-count convention, apply \eqref{eq:adjacent-dimension-interpolation}, and report predeclared sensitivity results for alternative density-slope and local-density matchings.

\paragraph{Two nonredundant spectral statistics.}
At conductor $50700$,
\begin{equation}
  \Corr\bigl(J_{E,5},j(\gamma_{E,1})\bigr)=0.9961.
\end{equation}
Thus the first ordinate and the residual product
\begin{equation}
  J_{E,2:5}=\sum_{k=2}^{5}j(\gamma_{E,k})
  \label{eq:residual-product}
\end{equation}
must be tested separately.  The random-matrix residual must be evaluated after the chosen unfolding is inverted to the physical ordinate scale.

\paragraph{Factorization of derivative-dependent reweighting.}
Apply \cref{prop:conditional-invariance} with $X_A=x_1$ and $Y_A=\log D_A$.  Derivative-only excision changes the marginal law of $Y_A$ but not the pointwise conditional law $X_A\mid Y_A=y$.  Aggregate rank correlations are not invariant because the marginal rank map of $Y_A$ changes.

For a specified rank-one curve representative, the BSD-normalized arithmetic quantity is
\begin{equation}
  \mathcal A_E
  :=\frac{L'(E,1)|E(\Q)_{\mathrm{tors}}|^2}
  {\Omega_E\prod_p c_p(E)}
  =\Reg(E)|\ShaE(E)|
  \label{eq:arithmetic-reduced-derivative}
\end{equation}
under BSD.  The individual factors on the left vary within an isogeny class, so the representative and period convention must be recorded.  Lang's height conjecture suggests a family-dependent lower envelope but neither a discrete odd-rank cutoff nor an excision fraction \cite{HindrySilverman1988,Silverman2010}.

\paragraph{Pre-registered experimental protocol.}
A valid test should be fixed before the family zeros are inspected:
\begin{enumerate}[leftmargin=1.6em,itemsep=0.3em]
  \item select a natural non-CM odd-sign rank-one family and certify the analytic rank and the first several positive ordinates;
  \item choose one exact-count matching convention for the elliptic and finite-matrix counting functions, together with predeclared alternative conventions;
  \item test the first-zero PIT and the physical-scale residual product $J_{2:5}$ separately under each frozen null;
  \item construct $\mathcal A_E$ for specified representatives and fix its transformation to the RMT derivative coordinate independently of the zero distributions;
  \item test $x_1\mid\log D_A$ before the derivative marginal or any sharp or soft cutoff;
  \item transfer the complete procedure without refitting to a second non-CM family.
\end{enumerate}
Failure of the conditional law rules out every derivative-only model of the form \eqref{eq:excision-factorization}.  Agreement of the conditional law together with a depleted arithmetic derivative tail supports a selection effect; agreement of both conditional and marginal laws supports an unexcised finite-conductor model under the chosen convention.

\subsection{A fixed-dimensional hard-edge theorem for the zero product}
\label{sec:hard-edge-tail}

\paragraph{Literature.}
The eigenangle density below is the Weyl integration formula for $SO(2N+1)$ \cite{Weyl1939}; see \cite{Mehta2004} or \cite{Forrester2010} for the determinantal form and for the edge exponents that produce the constant $3/2$.  The correspondence between the forced eigenvalue at $1$ and a forced central zero of odd order is the odd-orthogonal half of the Katz--Sarnak dictionary \cite{KatzSarnak1999,KatzSarnak1999BAMS}.

The singularity of $j(t)$ makes the large-product tail analytically accessible.  On the positive eigenangles of Haar $SO(2N+1)$, the Weyl density may be written
\begin{equation}
  p_N(\boldsymbol\theta)
  =\frac1{Z_N}\prod_{j=1}^{N}(1-\cos\theta_j)
   \prod_{1\le j<k\le N}(\cos\theta_j-\cos\theta_k)^2,
  \qquad \boldsymbol\theta\in(0,\pi)^N,
  \label{eq:SOodd-Weyl-density}
\end{equation}
where $Z_N$ absorbs the normalization and the symmetry factor.  Define the physical-angle zero-product statistic
\begin{equation}
  \mathcal J_N=\sum_{j=1}^{N}j(\theta_j),
  \qquad j(t)=\log\!\left(1+\frac{1}{4t^2}\right).
  \label{eq:RMT-zero-product}
\end{equation}

\begin{theorem}[Odd-orthogonal hard-edge tail]
For every fixed $N\ge1$, there is a finite constant $C_N>0$ such that
\begin{equation}
  \Pr\{\mathcal J_N>y\}
  \sim C_N\e^{-3y/2},
  \qquad y\to\infty.
  \label{eq:hard-edge-tail-law}
\end{equation}
More explicitly, with $\mathbf u=(u_2,\ldots,u_N)$,
\begin{align}
  C_N&=\frac{N}{48Z_N}\int_{(0,\pi)^{N-1}}
  A_N(\mathbf u)
  \exp\!\left(\frac32\sum_{k=2}^{N}j(u_k)\right)d\mathbf u,
  \label{eq:hard-edge-constant}\\
  A_N(\mathbf u)&=
  \prod_{k=2}^{N}(1-\cos u_k)^3
  \prod_{2\le j<k\le N}(\cos u_j-\cos u_k)^2.
  \label{eq:hard-edge-boundary-density}
\end{align}
For $N=1$, the integral is interpreted as $1$.
\end{theorem}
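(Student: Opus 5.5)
The plan is a one-big-jump (single small eigenangle) analysis.  Since $j(t)=-2\log(2t)+O(t^2)$ as $t\downarrow0$ and $j$ is bounded on $[\delta,\pi]$ for every $\delta>0$, the event $\{\mathcal J_N>y\}$ for large $y$ should be dominated by configurations in which exactly one eigenangle is of size about $\tfrac12e^{-y/2}$ and the others stay away from $0$.  By the symmetry of \eqref{eq:SOodd-Weyl-density} I will label the small angle $\theta_1=t$, which produces the factor $N$, and write $\mathbf u=(\theta_2,\ldots,\theta_N)$ and $S(\mathbf u)=\sum_{k\ge2}j(u_k)$.

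The first step is to expand the Weyl density as $t\to0$.  We have $1-\cos t=t^2/2+O(t^4)$, and $(\cos t-\cos u_k)^2=(1-\cos u_k)^2(1+O(t^2))$ uniformly when $u_k$ is bounded away from $0$.  Hence
\[
  p_N(t,\mathbf u)=\frac{t^2}{2Z_N}A_N(\mathbf u)\bigl(1+O(t^2)\bigr),
\]
which explains the cube in \eqref{eq:hard-edge-boundary-density}.

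The second step solves the constraint.  For fixed $\mathbf u$, the condition $j(t)>y-S(\mathbf u)$ is equivalent to $t<t_y(\mathbf u)$, where $t_y(\mathbf u)=\tfrac12e^{-(y-S(\mathbf u))/2}(1+o(1))$, because $j$ is decreasing.  Integrating the density gives
\[
  \int_0^{t_y}\frac{t^2}{2}\,dt=\frac{t_y^3}{6}=\frac{1}{48}e^{-3y/2}e^{3S(\mathbf u)/2}(1+o(1)).
\]
Multiplying by $N/Z_N$ and integrating in $\mathbf u$ yields exactly $C_Ne^{-3y/2}$, with $C_N$ as in \eqref{eq:hard-edge-constant}; for $N=1$ there are no $\mathbf u$ variables and the integral is $1$.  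For finiteness of $C_N$, note that $e^{3j(u)/2}\sim(2u)^{-3}$ while $(1-\cos u)^3\sim u^6/8$, so the integrand stays bounded near each $u_k=0$.  Near $u_j=u_k$ the Vandermonde factor only helps, so the integral is finite and positive.

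The main obstacle is justifying the passage to the limit uniformly, i.e.\ a dominated-convergence argument on $e^{3y/2}\Pr\{\mathcal J_N>y\}$.  I would split the probability at a threshold $\delta$.

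\begin{enumerate}
  \item Configurations with exactly one angle below $\delta$: the expansion above applies with uniform error, and the resulting contribution tends to $C_N$ as $y\to\infty$ and then $\delta\to0$.
  \item Configurations with two or more angles below $\delta$: here I will bound $\Pr\{\mathcal J_N>y,\ \theta_1,\theta_2<\delta\}$ by Chernoff's inequality.  I use $e^{3y/2}\mathbf 1\{\mathcal J_N>y\}\le e^{3\mathcal J_N/2}$, which handles the $\theta_1$ direction.  For the two small angles, a slightly larger exponent $\lambda\in(3/2,2)$ is still integrable against the local weight $\theta_1^2\theta_2^2(\theta_1^2-\theta_2^2)^2$.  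Indeed, that weight makes the two-small-angle region behave like an edge with effective exponent larger than $3/2$, so its tail is $o(e^{-3y/2})$, and in fact $O(e^{-\lambda y})$ with $\lambda>3/2$.
\end{enumerate}

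I expect the delicate bookkeeping to lie in case 2: tracking the joint small-angle exponents and checking that a $\lambda>3/2$ with $\mathbb E\,e^{\lambda\mathcal J_N}\mathbf 1\{\text{two small angles}\}<\infty$ really exists.  The single-small-angle limit in case 1 is routine once these error bounds are in place.
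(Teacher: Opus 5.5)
Your leading-order computation is the same as the paper's: distinguish one angle, use $p_N\approx t^2A_N/(2Z_N)$ and $t_y^3/6=\tfrac1{48}\e^{-3y/2}\e^{3S/2}$, multiply by $N$, and check finiteness of $C_N$. Your case~1 is also sound. The gap is in case~2, and it is more than bookkeeping: the estimate you propose there is false.

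\textbf{Why the moment bound fails.} The event $\{\theta_1,\theta_2<\delta\}$ contains configurations with $\theta_1\downarrow0$ while $\theta_2$ stays at, say, $\delta/2$. There the density is $\asymp\theta_1^2$ and $e^{\lambda j(\theta_1)}\asymp\theta_1^{-2\lambda}$. Hence $\mathbb E\,e^{\lambda\mathcal J_N}\mathbf 1\{\theta_1,\theta_2<\delta\}$ diverges like $\int_0\theta_1^{2-2\lambda}\,d\theta_1$ for every $\lambda\ge3/2$, including your bound $\e^{3y/2}\mathbf 1\{\mathcal J_N>y\}\le\e^{3\mathcal J_N/2}$. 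The weight $\theta_1^2\theta_2^2(\theta_1^2-\theta_2^2)^2$ improves the exponent only at the corner where both angles shrink together. It does not help along the axes of $(0,\delta)^2$.

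\textbf{Why the conclusion itself fails.} Take the ordinary one-small-angle mechanism with the second angle anywhere in $(\delta/2,\delta)$. Since $A_N\e^{3S/2}$ behaves like $u_2^3$ near $u_2=0$, this already gives $\Pr\{\mathcal J_N>y,\ \theta_1,\theta_2<\delta\}\gtrsim\delta^4\e^{-3y/2}$. So for fixed $\delta$ the event is of exact order $\e^{-3y/2}$, not $O(\e^{-\lambda y})$ with $\lambda>3/2$.

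\textbf{What case~2 actually needs.} You only need $\limsup_{y\to\infty}\e^{3y/2}\Pr\{\mathcal J_N>y,\ \text{two angles}<\delta\}\to0$ as $\delta\to0$. The clean way to get it is to distinguish the \emph{smallest} angle.
\begin{enumerate}
  \item On $\{t\le\min_k u_k\}$, monotonicity of cosine gives $0\le\cos t-\cos u_k\le1-\cos u_k$. Together with $1-\cos t\le t^2/2$, this yields $p_N(t,\mathbf u)\le t^2A_N(\mathbf u)/(2Z_N)$ exactly, not just asymptotically.
  \item Integrate over $0<t<\min\bigl(t_y(\mathbf u),\min_ku_k\bigr)$. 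Treat $\e^{y-S}\ge2$ and $\e^{y-S}<2$ separately; the latter covers $S\ge y$, where the $t$-integral is at most $\pi^3/6$. This gives $\e^{3y/2}\int\mathbf 1\{j(t)>y-S(\mathbf u)\}\,p_N(t,\mathbf u)\,dt\le C\,A_N(\mathbf u)\e^{3S(\mathbf u)/2}/Z_N$.
  \item The function $(1-\cos u)^3\bigl(1+1/(4u^2)\bigr)^{3/2}$ is bounded on $(0,\pi)$, so this majorant is bounded.
\end{enumerate}
This makes the case~2 contribution $O(\delta)\,\e^{-3y/2}$ in the limit. More simply, it lets dominated convergence act directly on $\e^{3y/2}\Pr\{\mathcal J_N>y\}=N\int(\cdots)$, with no $\delta$-split at all. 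This domination is exactly what the paper tacitly relies on when it invokes dominated convergence and says multi-edge regions are of lower order.
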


\begin{proof}
Distinguish one eigenangle $t$ and write the others as $\mathbf u$.  From \eqref{eq:SOodd-Weyl-density},
\begin{equation}
  p_N(t,\mathbf u)
  =\frac{t^2}{2Z_N}A_N(\mathbf u)(1+o(1)),
  \qquad t\downarrow0,
  \label{eq:Weyl-boundary-expansion}
\end{equation}
for almost every $\mathbf u$.  At the same time,
\begin{equation}
  j(t)=-2\log(2t)+o(1).
  \label{eq:j-hard-edge-asymptotic}
\end{equation}
Put $R(\mathbf u)=\sum_{k=2}^{N}j(u_k)$.  For fixed $\mathbf u$, the boundary of
$j(t)+R(\mathbf u)>y$ is
\begin{equation}
  t_y(\mathbf u)
  =\frac1{2\sqrt{\exp(y-R(\mathbf u))-1}}
  \sim\frac12\exp\!\left[-\frac{y-R(\mathbf u)}2\right].
\end{equation}
Integrating \eqref{eq:Weyl-boundary-expansion} from $0$ to $t_y(\mathbf u)$ gives
\begin{equation}
  \frac{A_N(\mathbf u)}{48Z_N}
  \exp\!\left(\frac32R(\mathbf u)\right)
  \e^{-3y/2}(1+o(1)).
\end{equation}
The boundary integral is finite.  Indeed, as one of the remaining angles tends to zero,
$(1-\cos u)^3\asymp u^6$ while $\exp(3j(u)/2)\asymp u^{-3}$, leaving an integrable factor $u^3$; eigenangle collisions only introduce additional zeros through the squared Vandermonde.  Dominated convergence therefore applies.  Summing over the $N$ possible distinguished eigenangles gives \eqref{eq:hard-edge-constant}.  Regions in which two or more angles approach the hard edge on the same exponential scale are of lower order because of the squared Vandermonde factor, so they do not alter the leading constant.
\end{proof}

The exponent $3/2$ is universal at fixed odd-orthogonal dimension: quadratic eigenangle repulsion gives $\Pr(\theta_{\min}<\varepsilon)\asymp\varepsilon^3$, while the logarithmic kernel converts $\varepsilon$ into $\e^{-y/2}$.  To compare with elliptic ordinates under the density-slope convention $\gamma=(N/L_E)\theta$, one should use
\begin{equation}
  \mathcal J_{N,L_E}
  =\sum_{j=1}^{N}\log\!\left(1+\frac{L_E^2}{4N^2\theta_j^2}\right).
\end{equation}
Put $a=L_E/N$ and $j_a(t)=\log(1+a^2/(4t^2))$.  The exponent remains $3/2$, and the exact scaled constant is
\begin{equation}
  C_N(a)=\frac{Na^3}{48Z_N}
  \int_{(0,\pi)^{N-1}}A_N(\mathbf u)
  \exp\!\left(\frac32\sum_{k=2}^{N}j_a(u_k)\right)d\mathbf u.
  \label{eq:scaled-hard-edge-constant}
\end{equation}
Thus the small-angle threshold contributes $a^3$, while the remaining boundary integral is also rescaled.  The exponent is scale-invariant but the finite-dimensional constant is convention-dependent.  This theorem makes the first-zero dominance of extreme zero products precise.  Further work is needed to compute these constants, determine the finite-$N$ crossover for conductor mixtures, and derive the corresponding centred law after unfolding to elliptic ordinates.

\subsection{Cross-conductor CM twists: a descriptive kernel check}

\paragraph{Literature.}
For complex multiplication and its effect on the Frobenius angles see Silverman \cite{Silverman2009}.  CM families do not have the same low-zero symmetry type as the non-CM families modelled in \cref{subsec:family-symmetry} \cite{KatzSarnak1999,ILS2000}, so what follows is a descriptive kernel check and nothing more; no family conclusion is transported from it.

For forty-nine numerical rank-one CM prime twists, the raw regressions
\begin{align}
  J_E&=-0.25146+0.19184V_E,&&R^2=0.9441,\label{eq:CM-full-kernel}\\
  j_1(E)&=-0.20679+0.24147v_1(E),&&R^2=0.9584,\label{eq:CM-first-kernel}\\
  J_E-j_1(E)&=-0.01576+0.13387\bigl(V_E-v_1(E)\bigr),&&R^2=0.9987\label{eq:CM-rest-kernel}
\end{align}
remain useful descriptive checks of the same kernel geometry.  In particular, the residual-spectrum slope approaches the exact high-zero coefficient $1/8$ in \eqref{eq:Delta-decomposition}.

A cross-conductor regression based on
\begin{equation}
  \widetilde J_E=J_E-\frac14\log N_E,
  \qquad
  \widetilde V_E=V_E-\log N_E
  \label{eq:centered-variables}
\end{equation}
is not interpreted here: using different conductor subtractions in the dependent and explanatory variables mechanically creates a residual $\log N_E$ coefficient.  Comparisons between $\log N_E$ and $\log\log N_E$ in that mismatched regression are therefore discarded.

Moreover, these forty-nine twists are CM.  They may illustrate the deterministic $j$--$v$ relation, but they do not provide a non-CM orthogonal symmetry test, and the non-CM symmetric-power input invoked for the race mean should not be transferred to them without a separate CM treatment.  Any family-level finite-conductor centring must instead estimate coherent mean functions for $J_E$, $V_E$, and $\Delta_E$ in a substantially wider, natural non-CM family.

\subsection{The family-fluctuation problem}

\paragraph{Literature.}
The moment and value-distribution predictions that a family statement would have to match are those of Keating and Snaith \cite{KeatingSnaith2000} and of Conrey, Farmer, Keating, Rubinstein and Snaith \cite{CFKRS2005}; the edge-value analogue, with the cancellation that a naive independent-prime model misses, is Granville and Soundararajan \cite{GranvilleSoundararajan2003}.

The exact identity can be written
\begin{equation}
  \log\left|\frac{L^{(r)}(E,1)}{r!}\right|
  =\left(r-\frac32\right)\log2
   +\log|L(E,3/2)|-\widetilde J_E.
  \label{eq:centered-identity}
\end{equation}
For a non-CM family $\mathscr C$ of fixed root number, define finite-conductor centred variables
\begin{equation}
  X_E=\log|L(E,3/2)|-m_L(N_E),
  \qquad
  Y_E=\widetilde J_E-m_J(N_E),
  \label{eq:XY}
\end{equation}
where $m_L$ and $m_J$ incorporate the lower-order family mean.  The centred leading coefficient is $X_E-Y_E$ up to the corresponding deterministic mean.

\begin{problem}[Joint boundary-value--zero-product law]
Determine the joint finite-conductor and limiting laws of $(X_E,Y_E)$ in non-CM orthogonal families.  In particular:
\begin{enumerate}
  \item determine the marginal law of the boundary value $L(E,3/2)$;
  \item compare the marginal law of $Y_E$ with an orthogonal random-matrix model for the zero product;
  \item determine the covariance and its dependence on conductor, symmetry type, and the first zero;
  \item isolate the small-leading-coefficient tail where the first zero approaches the centre.
\end{enumerate}
\end{problem}

The word ``boundary'' is important.  In the normalization \eqref{eq:completed}, the Euler product is absolutely convergent for $\Re s>3/2$, not at $s=3/2$.  Thus the assertion that $L(E,3/2)$ is an immediately solvable absolutely convergent random Euler product is incorrect.  The point corresponds to the edge $\Re s=1$ in unitary normalization.  Random Euler-product methods remain relevant by analogy with edge-value problems \cite{GranvilleSoundararajan2003}, but they require cancellation and edge nonvanishing rather than a direct independent-prime product argument.

\section{BSD information in truncated zero products}
\label{sec:bsd-information}

\paragraph{Literature.}
The conjecture in the refined form used here is \cite{BSD1965}, with the modern formulation in \cite{Tate1966} and a survey in \cite{Wiles2006}.  For analytic rank at most one the group $E(\Q)$ has the predicted rank and $\ShaE(E)$ is finite, by Gross--Zagier \cite{GrossZagier1986} and Kolyvagin \cite{Kolyvagin1988}; under the Cassels--Tate hypotheses the order of $\ShaE(E)$ is a square \cite{Cassels1965,PoonenStoll1999}, which is the arithmetic input that makes the interval \eqref{eq:square-interval} informative.  Periods, Tamagawa factors, torsion and regulators are computed as in \cite{Cremona1997} and tabulated in \cite{LMFDB}; the central derivative is evaluated as in \cite{Dokchitser2004}.  The zero-counting remainder needed to certify the tail is the elliptic case of the standard argument-principle estimate \cite{IwaniecKowalski2004}.

The refined BSD formula is
\begin{equation}
  \frac{L^{(r)}(E,1)}{r!}
  =\frac{\Omega_E\Reg(E)|\ShaE(E)|\prod_p c_p(E)}
  {|E(\Q)_{\mathrm{tors}}|^2}.
  \label{eq:BSD}
\end{equation}
Substitution of the full product $J_E$ from \eqref{eq:multiplicative-identity} merely rewrites the standard analytic value of $|\ShaE(E)|$.  A nontrivial information question begins only after truncating the zero product.

Assume ECRH and put
\begin{equation}
  J_E(T)=\sum_{0<\gamma_E\le T}
  \log\!\left(1+\frac{1}{4\gamma_E^2}\right),
  \qquad
  R_E(T)=J_E-J_E(T)\ge0.
  \label{eq:J-truncation}
\end{equation}
Define
\begin{equation}
  \mathscr D_E(T)
  =\frac{2^{r-3/2}N_E^{1/4}L(E,3/2)\e^{-J_E(T)}
  |E(\Q)_{\mathrm{tors}}|^2}
  {\Omega_E\Reg(E)\prod_p c_p(E)}.
  \label{eq:D-truncated}
\end{equation}
Then BSD and the Hadamard identity give
\begin{equation}
  \mathscr D_E(T)=|\ShaE(E)|\e^{R_E(T)}.
  \label{eq:D-tail}
\end{equation}
If $0\le R_E(T)\le\varepsilon_E(T)$, then
\begin{equation}
  \boxed{
  \mathscr D_E(T)\e^{-\varepsilon_E(T)}
  \le |\ShaE(E)|\le\mathscr D_E(T).
  }
  \label{eq:square-interval}
\end{equation}
When $\ShaE(E)$ is finite, its order is a square under the usual Cassels--Tate hypotheses \cite{Cassels1965,PoonenStoll1999}.  The first ordinates therefore carry a quantifiable amount of arithmetic information: the interval width is the multiplicative factor $\e^{\varepsilon_E(T)}$.

A rigorous upper bound may be written in Stieltjes form,
\begin{equation}
  R_E(T)
  =\int_T^\infty j(u)\,dN_E^+(u)
  =-j(T)N_E^+(T)-\int_T^\infty N_E^+(u)j'(u)\,du,
  \label{eq:tail-bound}
\end{equation}
with the endpoint convention chosen consistently.  An effective theorem requires an explicit zero-counting formula and a bound for its remainder.  The leading-density approximation
\begin{equation}
  R_E(T)\approx
  \int_T^\infty
  \log\!\left(1+\frac{1}{4u^2}\right)
  \frac1\pi\log\frac{\sqrt{N_E}u}{2\pi}\,du
  \label{eq:tail-heuristic}
\end{equation}
is a useful central estimate, but the conductor-$50700$ audit shows that it cannot be used as an upper bound without a zero-counting correction.

\begin{proposition}[From zero-counting error to a certified product tail]
\label{prop:counting-to-tail}
Suppose that for $u\ge T$ one has
\begin{equation}
  N_E^+(u)=M_E(u)+S_E(u),
  \qquad |S_E(u)|\le B_E(u),
\end{equation}
where $M_E$ is an explicit smooth counting function, $B_E$ is nonnegative, and $j(u)B_E(u)\to0$ as $u\to\infty$.  Put
\begin{equation}
  R_{E,\mathrm{main}}(T)=\int_T^\infty j(u)\,dM_E(u).
\end{equation}
Then
\begin{equation}
  \boxed{
  R_E(T)\le R_{E,\mathrm{main}}(T)
  +j(T)B_E(T)
  +\int_T^\infty B_E(u)|j'(u)|\,du .
  }
  \label{eq:certified-tail-from-counting}
\end{equation}
The analogous lower bound is obtained by reversing the sign of the final two terms.
\end{proposition}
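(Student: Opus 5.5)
The plan is to write the tail as a Stieltjes integral and integrate by parts on a finite interval $[T,U]$, then let $U\to\infty$. With the endpoint convention of \eqref{eq:tail-bound} (zeros at height exactly $T$ excluded, so $N_E^+$ taken right-continuous and the measure supported on $(T,\infty)$), we have
\[
  R_E(T)=\lim_{U\to\infty}\int_{(T,U]} j(u)\,dN_E^+(u),
\]
which holds by monotone convergence since $j>0$ and $R_E(T)$ is finite by \cref{prop:Delta-decomposition}. We then substitute $N_E^+=M_E+S_E$. The smooth part gives $\int_T^U j\,dM_E\to R_{E,\mathrm{main}}(T)$. We take this integral to be convergent as part of the hypothesis that $M_E$ is an explicit smooth counting function. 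Since $M_E$ grows like $u\log u$ and $j(u)\sim 1/(4u^2)$, it does converge in all intended cases.

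The remaining part is $\int_{(T,U]} j\,dS_E$, and we integrate it by parts. Because $j$ is smooth and $S_E$ has bounded variation on compacts, this gives
\[
  \int_{(T,U]} j\,dS_E = j(U)S_E(U)-j(T)S_E(T)-\int_T^U S_E(u)j'(u)\,du.
\]
The boundary term at $U$ satisfies $|j(U)S_E(U)|\le j(U)B_E(U)\to0$ by hypothesis. The term at $T$ is bounded in absolute value by $j(T)B_E(T)$. The integral is bounded by $\int_T^U B_E|j'|\,du$, which increases to $\int_T^\infty B_E|j'|\,du$. This last quantity may be $+\infty$, in which case the bound is trivial. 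Letting $U\to\infty$ yields \eqref{eq:certified-tail-from-counting}. Using the same three estimates with the opposite signs gives the lower bound
\[
  R_E(T)\ge R_{E,\mathrm{main}}(T)-j(T)B_E(T)-\int_T^\infty B_E|j'|\,du .
\]

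The main obstacle is bookkeeping rather than analysis. The jump of $S_E$ at $u=T$, and the choice of left or right limits there, has to match the convention defining $R_E(T)=J_E-J_E(T)$ with $J_E(T)$ counting $\gamma_E\le T$. With right-continuous $N_E^+$ and the integral taken over $(T,U]$, the integration-by-parts formula above holds exactly, and the value $S_E(T)$ that appears is controlled by the hypothesis at $u=T$. Before concluding, we will also record that $j'<0$, so $|j'|=-j'$, and that the limit exchange as $U\to\infty$ is justified because every term is either convergent or monotone.
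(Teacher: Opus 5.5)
Your proof is correct and follows the paper's argument: split $N_E^+=M_E+S_E$, integrate the $S_E$ part by parts, and bound the boundary term at $T$ and the resulting integral using $|S_E|\le B_E$. Your truncation at $U$, where $j(U)B_E(U)\to0$ kills the upper boundary term, together with your right-continuity bookkeeping at $u=T$, makes explicit what the paper's short proof leaves implicit.
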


\begin{proof}
Write $R_E(T)=\int_T^\infty j\,dM_E+\int_T^\infty j\,dS_E$.  Integration by parts gives
\[
  \int_T^\infty j\,dS_E
  =-j(T)S_E(T)-\int_T^\infty S_E(u)j'(u)\,du,
\]
and the asserted bounds follow from $|S_E|\le B_E$.
\end{proof}

\subsection{What the truncated product can and cannot do}

\paragraph{Literature.}
Direct evaluation of $L^{(r)}(E,1)$ by the smoothed method of Dokchitser \cite{Dokchitser2004}, with the practical refinements described by Rubinstein \cite{RubinsteinLNS2005}, is computationally more natural than locating enough zeros for the same purpose; the arithmetic invariants entering \eqref{eq:BSD} are obtained as in Cremona \cite{Cremona1997}.  The construction below should therefore be read as an information statement, not as an algorithm.

The construction should not be advertised as a faster algorithm for $|\ShaE(E)|$.  The edge value, period, regulator, Tamagawa factors and torsion already locate the analytic BSD value, and for large square order all of these multiplicative factors must be known to relative precision comparable with the spacing of adjacent squares.  Standard direct evaluations of the central leading coefficient are more natural computationally than locating enough zeros solely for this purpose.

Two interpretations remain meaningful:
\begin{enumerate}
  \item \textbf{Arithmetic information content.}  The first $m$ ordinates reduce the allowed multiplicative uncertainty from $\e^{J_E}$ to $\e^{R_E(T_m)}$.  The resolving-capacity curve measures how many square candidates remain after revealing part of the spectrum.
  \item \textbf{Zero-list verification.}  Given independently evaluated arithmetic data, the square constraint and the exact Hadamard identity can detect missing or spurious ordinates.  The $35/36$ tail split at conductor $50700$ is a concrete instance of this diagnostic use.
\end{enumerate}
These are ECRH--BSD consistency statements, not an independent proof of BSD or a computational shortcut.

\subsection{Conductor $50700$: an in-sample information audit}

\paragraph{Literature.}
The non-spectral factors are taken from \cite{LMFDB,Cremona1997}, and the squareness of $|\ShaE(E)|$ that turns a multiplicative interval into a finite candidate list is \cite{Cassels1965,PoonenStoll1999}.

For a candidate square $|\ShaE(E)|=n^2$, a multiplicative uncertainty $\varepsilon_m$ distinguishes $n^2$ from the next square whenever
\begin{equation}
  \varepsilon_m<2\log\!\left(1+\frac1n\right).
  \label{eq:square-resolution-criterion}
\end{equation}
Using the exact numerical tails obtained from $L(E,3/2)$ and $L'(E,1)$ gives the capacities in \cref{tab:sieve-capacity}.

\begin{table}[H]
\centering
\small
\caption{In-sample information capacity of the conductor-$50700$ zero lists.  Because the exact tails use the central derivative, this table is a calibration and zero-list audit, not an independent determination of $|\ShaE(E)|$.}
\label{tab:sieve-capacity}
\begin{tabular}{@{}rrrr@{}}
\toprule
$m$ & $\max_E R_{E,m}$ & $n_{\max}$ & $n_{\max}^2$\\
\midrule
0  & 0.816730 & 1  & 1\\
1  & 0.299528 & 6  & 36\\
2  & 0.209395 & 9  & 81\\
3  & 0.160527 & 11 & 121\\
5  & 0.116303 & 16 & 256\\
8  & 0.085378 & 22 & 484\\
12 & 0.064920 & 30 & 900\\
20 & 0.045377 & 43 & 1849\\
35 & 0.030300 & 65 & 4225\\
\bottomrule
\end{tabular}
\end{table}

For the target squares $1,4,9,16$, the number of classes whose in-sample interval is unique before any zero is $17/17$, $16/17$, $13/17$, and $1/17$, respectively; after the first positive zero all seventeen are unique.  These figures quantify information carried by the low spectrum once the non-spectral prefactor is already known.  They must not be read as a computation of $\ShaE(E)$ from the zeros.

\paragraph{A worked information contraction.}
For class $50700.\mathrm a$, the exact value is $J_E=0.816730$.  Revealing the first $m$ ordinates reduces the remaining multiplicative uncertainty from $\exp(J_E)$ to $\exp(R_{E,m})$ as follows:
\begin{center}
\small
\begin{tabular}{@{}rrrrrr@{}}
\toprule
$m$ & $0$ & $1$ & $3$ & $8$ & $35$\\
\midrule
$R_{E,m}$ & $0.816730$ & $0.264914$ & $0.157456$ & $0.085246$ & $0.030276$\\
$\exp(R_{E,m})$ & $2.2631$ & $1.3033$ & $1.1705$ & $1.0890$ & $1.0307$\\
\bottomrule
\end{tabular}
\end{center}
This trace makes the sieve architecture concrete: one low zero removes most of the multiplicative ambiguity, while later zeros refine the interval more slowly.  Because the displayed remainders use the independently evaluated central derivative, the example is an in-sample information audit, not a prediction of $|\ShaE(E)|$.

The measured tails in this table are circular for a prospective BSD determination, since they were inferred through \eqref{eq:J-from-independent-L32} using $L'(E,1)$.  A non-circular experiment must either use a certified upper bound derived only from zero counting, or calibrate a bound on training curves and apply it to held-out curves.  The smooth-density value by itself is unsafe because it underestimates fifteen of the seventeen exact numerical tails.

\begin{problem}[Certified spectral information]
Construct an explicit upper bound for $R_E(T)$ using a zero-counting remainder, calibrate any numerical constants on a training family, and test on held-out curves whether truncated zero products correctly bound the independently evaluated BSD leading coefficient and detect incomplete zero lists.
\end{problem}

\section{Intrinsic elliptic analogues of false Chebyshev primes}
\label{sec:crossings}

The construction is motivated by the false Chebyshev-prime transition layer introduced in \cite{PlanatFalseCheb2026}.  There the relevant condition is controlled by the midpoint of the jump of the classical Chebyshev function, and the natural asymptotic observable is a logarithmically weighted local time.  The present elliptic analogue does not use that classical set as an external selector of curves.  It transfers only the local mechanism: a prime's own Frobenius jump can carry a cumulative race across a reference level, producing a curve-dependent crossing set.

The topological interpretation of \cref{sec:topological-interpretation} upgrades this half-jump mechanism to a prime-sampled shrinking-target problem on the phase torus generated by the noncentral zeros.

\paragraph{Literature.}
The classical race whose jumps are being imitated is the one analysed by Rubinstein and Sarnak \cite{RubinsteinSarnak1994}, and its elliptic version by Fiorilli \cite{Fiorilli2014}; the notion of a Chebyshev prime, defined through the jumps of $\mathrm{li}(x)-\pi(x)$ and its Chebyshev-function variants, was introduced in \cite{PlanatSole2013}, and the half-jump mechanism together with the weighted local-time law it obeys are from \cite{PlanatFalseCheb2026}.  Once the level condition is written on the phase torus, the object is a shrinking-target set for a linear flow, sampled along the sparse sequence $u=\log q$.  For shrinking targets under unrestricted time see Hill and Velani \cite{HillVelani1995} and Kleinbock and Margulis \cite{KleinbockMargulis1999}; the difficulty here, and the reason no theorem is claimed, is that arithmetic sampling destroys the equidistribution input on which those results rest.

\subsection{Midpoint, half-jump, and orientation}

\paragraph{Literature.}
The midpoint convention is forced rather than chosen: a truncated explicit formula converges at a jump to the mean of the one-sided limits, as in the classical treatment of $\psi_0$ \cite{IwaniecKowalski2004}.  The half-jump mechanism in the classical case is \cite{PlanatFalseCheb2026}, and the race whose sign changes are being tracked is the elliptic analogue of \cite{RubinsteinSarnak1994,Fiorilli2014}.

At a good prime $q$, define the midpoint of the logarithmically weighted race \eqref{eq:race} by
\begin{equation}
  B_E^{*}(q)=\frac{1}{\sqrt q}
  \left(
  -\sum_{\substack{\ell<q\\\ell\nmid N_E}}
   \frac{a_\ell(E)}{\sqrt\ell}\log\ell
  -\frac{a_q(E)\log q}{2\sqrt q}
  \right).
  \label{eq:midpoint}
\end{equation}
Set
\begin{equation}
  H_E(q)=\frac{a_q(E)\log q}{2q},
  \qquad T_E(q)=|H_E(q)|.
  \label{eq:half-jump}
\end{equation}
The pre- and post-jump values at the same normalization are
\begin{equation}
  \mathcal E_E^{*}(q^-)=B_E^{*}(q)+H_E(q),
  \qquad
  \mathcal E_E^{*}(q)=B_E^{*}(q)-H_E(q).
  \label{eq:endpoints}
\end{equation}
Therefore the prime jump crosses a level $c$ if and only if
\begin{equation}
  \boxed{|B_E^{*}(q)-c|<T_E(q).}
  \label{eq:crossing-criterion}
\end{equation}
The correctly oriented sets are
\begin{equation}
  \mathcal F^{\uparrow}_{E,c}
  =\{q:\ a_q(E)<0,\ |B_E^{*}(q)-c|<T_E(q)\},
  \label{eq:upward}
\end{equation}
\begin{equation}
  \mathcal F^{\downarrow}_{E,c}
  =\{q:\ a_q(E)>0,\ |B_E^{*}(q)-c|<T_E(q)\}.
  \label{eq:downward}
\end{equation}
Their union $\mathcal C_E(c)$ is the full window in \eqref{eq:crossing-criterion}.  The midpoint is not a discretionary convention: a symmetric Fourier explicit formula converges at a jump to the midpoint.  Accordingly, the half-jump enters the spectral model only through the width $T_E(q)$ and the orientation $\operatorname{sgn}a_q(E)$; it must not be added once more to the Fourier field.

Two levels are natural: $c=0$ detects a sign change of the uncentred race, while $c=\mu_E=2r-1$ removes the deterministic mean and probes the centred noncentral field.  An elliptic crossing prime is generated by the interaction of the zero background and the Frobenius amplitude and orientation.

\subsection{Topological meaning of the elliptic crossing set}
\label{subsec:crossing-topology}

\paragraph{Literature.}
The level hypersurfaces and their coarea measure are those of \cref{prop:phase-morse,prop:torus-local-time}, hence \cite{Milnor1963,Federer1969}.  Sets of times at which an orbit hits a shrinking neighbourhood of a fixed target are the subject of the shrinking-target theory of Hill and Velani \cite{HillVelani1995} and of the logarithm laws of Kleinbock and Margulis \cite{KleinbockMargulis1999}; the present targets shrink at a prime-dependent rate and are sampled only at $u=\log q$, which is what places the statement outside that theory.

The analogy with false Chebyshev primes is not limited to the
formal presence of a half-jump.  It has a finite-dimensional
dynamical interpretation through the phase flow of the truncated
explicit formula.

Fix $M\geq 1$ and write the centred $M$-zero truncation in the form
\begin{equation}
  \mathcal E^{*}_{E,M}(\e^u)
  =
  \mu_E+\Phi_{E,M}\bigl(\boldsymbol\theta_E(u)\bigr),
  \qquad
  \boldsymbol\theta_E(u)
  =
  u(\gamma_{E,1},\ldots,\gamma_{E,M})
  \pmod{2\pi},
  \label{eq:crossing-phase-flow}
\end{equation}
where $\Phi_{E,M}$ is the corresponding trigonometric observable on
$\T^M$.  For a regular level $c$, let
\begin{equation}
  \Sigma_{E,M}(c)
  =
  \left\{
  \boldsymbol\theta\in\T^M:
  \mu_E+\Phi_{E,M}(\boldsymbol\theta)=c
  \right\}.
  \label{eq:crossing-level-hypersurface}
\end{equation}
This is a codimension-one level hypersurface of the phase torus.

At a good prime $q$, the exact midpoint condition
\[
  |B_E^{*}(q)-c|<T_E(q)
\]
states that the prime's own Frobenius jump straddles the level $c$.  A symmetric Fourier truncation already models this midpoint.  Thus the prime-sampled phase point
\[
  \boldsymbol\theta_E(\log q)
\]
lies in the oriented, untranslated shrinking tube
\begin{equation}
  \mathcal U_{E,M}(q;c)
  =
  \left\{
  \boldsymbol\theta\in\T^M:
  \left|
  \mu_E+\Phi_{E,M}(\boldsymbol\theta)-c
  \right|<|H_E(q)|
  \right\}
  \label{eq:prime-dependent-tube}
\end{equation}
of the level hypersurface, up to the prime-power, zero-tail, and explicit-formula remainders.  The half-jump determines the tube width and orientation but does not translate its centre.
  Since
\[
  T_E(q)=\frac{|a_q(E)|\log q}{2q}\longrightarrow0,
\]
the elliptic crossing primes form a prime-sampled shrinking-target
set around a level hypersurface of the zero-generated phase flow.

The Frobenius trace carries two distinct pieces of information.
Its magnitude $|a_q(E)|$ determines the width of the shrinking tube,
whereas its sign determines the orientation of the crossing:
$a_q(E)<0$ gives an upward crossing and $a_q(E)>0$ a downward
crossing.  Thus $\mathcal F^{\uparrow}_{E,c}$ and
$\mathcal F^{\downarrow}_{E,c}$ are oriented arithmetic
intersection sets.

This interpretation explains the weighted statistic
\[
  \mathcal L_E(X;c)
  =
  \frac{1}{\log X}
  \sum_{\substack{q\leq X\\q\in\mathcal C_E(c)\\a_q(E)\neq0}}
  \frac{1}{|a_q(E)|}.
\]
The factor $1/|a_q(E)|$ removes the variable Frobenius amplitude,
leaving the logarithmic prime weight associated with occupation of
a shrinking tube.  Under the joint prime-sampled local-limit
hypothesis, the limiting value is the coarea density of the level
$c$.  The continuous torus-flow identity of
\cref{sec:topological-interpretation} supplies the finite-dimensional
topological model; the passage to prime sampling and shrinking
targets is the genuinely arithmetic open problem.

\Cref{fig:crossing-definition} displays the four ingredients at once: the
oriented jump, the shrinking window, the level hypersurface and the
prime-sampled orbit.  Consequently, the elliptic analogue of a false
Chebyshev prime may be summarized as
\begin{center}
\fbox{\parbox{0.86\textwidth}{\centering
A prime whose own Frobenius jump produces an oriented shrinking-target
intersection with a level hypersurface of the zero-generated phase flow.}}
\end{center}

\begin{figure}[htbp]
\centering
\includegraphics[width=\textwidth]{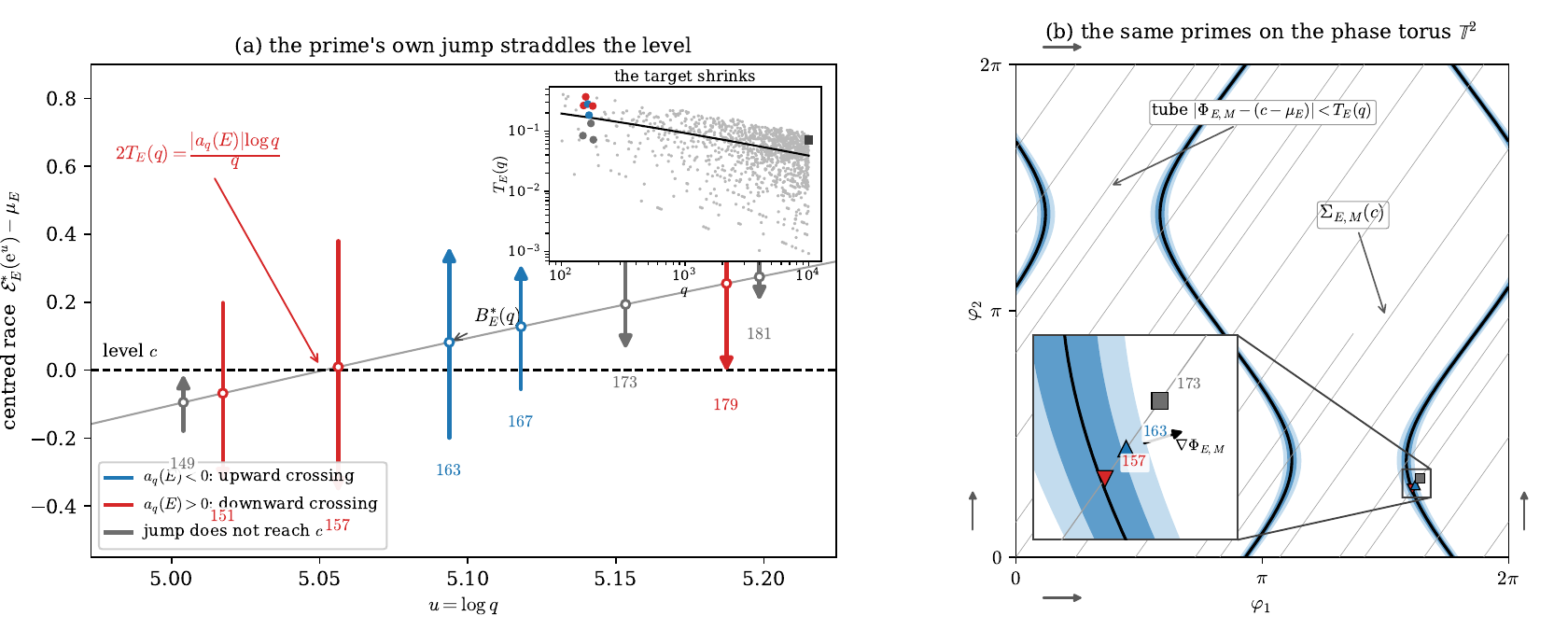}
\caption{The four ingredients of the crossing condition, displayed together.
\textbf{(a)} The centred race near $q\approx160$.  Each vertical arrow is the
jump of $\mathcal E_E^{*}$ at a single prime; it runs from the pre-jump value
$B_E^{*}(q)+H_E(q)$ to the post-jump value $B_E^{*}(q)-H_E(q)$, so its total
length is the window width $2T_E(q)=|a_q(E)|\log q/q$, and its direction is
fixed by $\operatorname{sgn}a_q(E)$.  The prime crosses the level $c$ exactly
when its own arrow reaches across the dashed line, which is condition
\eqref{eq:crossing-criterion}; the open circle is the midpoint $B_E^{*}(q)$ to
which the truncated explicit formula converges.  Inset: $T_E(q)$ for all good
$q\le10^4$, with the Sato--Tate mean $\tfrac{4}{3\pi}\log q/\sqrt q$; the
target shrinks like $q^{-1/2}\log q$.
\textbf{(b)} The same primes as points $\boldsymbol\theta_E(\log q)$ of the
Kronecker orbit on $\mathbb T^{2}$.  The heavy curve is the level set
$\Sigma_{E,M}(c)$, the pale band is the tube of half-width $T_E(173)$ and the
dark band that of $T_E(9973)$, so the two shadings show the same target at two
prime-dependent widths.  Zoom: $q=157$ lies inside the narrow tube, $q=163$
inside the wide one only, and $q=173$ outside both, matching their status in
panel~(a); the arrow is $\nabla\Phi_{E,M}$, whose sign against
$\operatorname{sgn}a_q(E)$ orients the intersection.  The Frobenius traces are
the exact $a_q(E)$ of the rank-one curve $y^2+y=x^3-x$; the zero field is the
two-mode truncation $\gamma=(1,\sqrt2)$ of \eqref{eq:truncated-phase-field},
chosen so that the geometry is visible on a page.  Both panels display one and
the same function, since $Z_{E,M}(u)=\Phi_{E,M}(\boldsymbol\gamma_Eu)$.}
\label{fig:crossing-definition}
\end{figure}

\subsection{A finite-dimensional Sato--Tate--phase theorem}

\paragraph{Literature.}
The unconditional inputs are the Sato--Tate theorem for non-CM curves \cite{BLGHT2011}, the symmetric-power automorphy of Newton and Thorne \cite{NewtonThorne2021I,NewtonThorne2021II}, and nonvanishing on $\Re s=1$ \cite{JacquetShalika1976}; the Weyl-criterion and partial-summation steps are as in \cite{IwaniecKowalski2004}.

For a non-CM curve write
\begin{equation}
  a_q(E)=2\sqrt q\cos\theta_q,
  \qquad \theta_q\in[0,\pi].
\end{equation}
The variable window $T_E(q)$ requires joint control of the Sato--Tate angle and the logarithmic zero phases, not phase equidistribution alone.

\begin{theorem}[Finite-dimensional amplitude--phase equidistribution]
Let $E/\Q$ be non-CM and let $\omega_1,\ldots,\omega_M$ be real numbers linearly independent over $\Q$.  Then, for every continuous function $h$ on $[0,\pi]\times\T^M$,
\begin{align}
 &\frac{1}{\log X}
 \sum_{\substack{q\le X\\q\nmid N_E}}
 \frac{\log q}{q}
 h\bigl(\theta_q,\omega_1\log q,\ldots,\omega_M\log q\bigr)
 \notag\\
 &\hspace{4em}\longrightarrow
 \int_0^\pi\int_{\T^M}
 h(\theta,\boldsymbol\phi)\,
 d\mu_{\ST}(\theta)\,d\boldsymbol\phi,
 \label{eq:joint-equidistribution}
\end{align}
where $d\boldsymbol\phi$ is normalized Haar measure.
\end{theorem}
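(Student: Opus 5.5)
By Stone--Weierstrass on the compact space $[0,\pi]\times\T^M$, it suffices to prove \eqref{eq:joint-equidistribution} for products $h(\theta,\boldsymbol\phi)=U_n(\cos\theta)\,e^{i\langle\mathbf m,\boldsymbol\phi\rangle}$. Here $U_n$ is the Chebyshev polynomial of the second kind, so that $U_n(\cos\theta)=\sin((n+1)\theta)/\sin\theta$ is the trace of $\operatorname{Sym}^n$. The vector $\mathbf m$ ranges over $\Z^M$; if $\Z$ is unavailable as a macro one simply writes integer vectors. The linear span of these products is a unital self-adjoint subalgebra that separates points, since the $U_n$ span the polynomials in $\cos\theta$. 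The left side of \eqref{eq:joint-equidistribution} is a positive linear functional of total mass $\frac1{\log X}\sum_{q\le X}\log q/q\to1$ by Mertens, hence uniformly bounded. Uniform approximation therefore reduces everything to these test functions. Put $\tau=\langle\mathbf m,\boldsymbol\omega\rangle\in\R$. Since the $\omega_k$ are linearly independent over $\Q$, we have $\tau=0$ only when $\mathbf m=0$. The right side of \eqref{eq:joint-equidistribution} equals $1$ when $(n,\mathbf m)=(0,0)$ and $0$ otherwise, by orthonormality of $U_n$ for $\mu_{\ST}$ and of characters on $\T^M$.

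The core step is therefore the estimate
\[
  \frac1{\log X}\sum_{\substack{q\le X\\q\nmid N_E}}\frac{\log q}{q}\,U_n(\cos\theta_q)\,q^{i\tau}\longrightarrow\delta_{n,0}\delta_{\tau,0}.
\]
For $n=0$ this is a Mertens-type sum twisted by $q^{i\tau}$. I would get it from the prime number theorem with partial summation: $\sum_{q\le x}\log q=x+o(x)$ gives $\sum_{q\le X}q^{-1+i\tau}\log q=(X^{i\tau}-1)/(i\tau)+O(1)$ for $\tau\ne0$. This is bounded, so it dies after division by $\log X$. The case $\tau=0$ gives $\log X+O(1)$.

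For $n\ge1$ I would use the automorphy of $\operatorname{Sym}^nE$ due to Newton and Thorne \cite{NewtonThorne2021I,NewtonThorne2021II}. Together with the nonvanishing of $L(\operatorname{Sym}^nE,s)$ on the edge of the critical strip \cite{JacquetShalika1976}, this yields $\sum_{q\le x}U_n(\cos\theta_q)\log q\,q^{-i\tau}=o(x)$ for each fixed $\tau$. Equivalently, I would use the prime number theorem for the twisted cuspidal representation $\operatorname{Sym}^n\pi_E\otimes|\cdot|^{i\tau}$, whose $L$-function has neither pole nor zero on that line. Bad primes and prime powers contribute $O(1)$, using $|U_n|\le n+1$ and the fact that prime powers are weighted by $\log q/q^k$. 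Partial summation then converts the $o(x)$ bound into $o(\log X)$ for the weighted sum.

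The main obstacle is this uniform edge prime number theorem for $\operatorname{Sym}^n$ twisted by $q^{i\tau}$, with $\tau$ fixed. It is only needed as a qualitative $o(x)$ statement for each fixed $(n,\tau)$, and that is what makes the argument work. Only finitely many pairs occur for any trigonometric-polynomial approximant, so no uniformity in $n$ or $\tau$ is required. I expect this to be deducible by the standard Wiener--Ikehara argument from holomorphy and nonvanishing of $L(\operatorname{Sym}^nE,s+i\tau)$ on the line of absolute convergence. The remaining steps, namely orthogonality, Stone--Weierstrass and Mertens normalization, are routine.
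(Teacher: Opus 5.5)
Your proposal is correct and follows the paper's proof sketch essentially step for step. Both reduce, via the Chebyshev basis and characters of $\T^M$ (Stone--Weierstrass in your version, the Weyl criterion in the paper's), to sums of $U_n(\cos\theta_q)\,q^{i\tau}$; both handle $n=0$, $\tau\neq0$ with the ordinary prime number theorem and partial summation, and $n\ge1$ with Newton--Thorne automorphy plus Jacquet--Shalika nonvanishing on the edge. One small correction: partial summation from $\sum_{q\le x}\log q=x+o(x)$ gives only an $o(\log X)$ error in the twisted Mertens sum, not $O(1)$ (the $O(1)$ would need the classical error term), but $o(\log X)$ is all you actually use.
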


\begin{proof}[Proof sketch]
By the Weyl criterion and the Chebyshev basis, it suffices to treat
\begin{equation}
  U_m(\cos\theta_q)\,q^{i\tau},
  \qquad \tau=\boldsymbol k\cdot\boldsymbol\omega.
\end{equation}
For $m\ge1$, Newton--Thorne automorphy and cuspidality of $\operatorname{Sym}^mE$, together with Jacquet--Shalika nonvanishing on $\Re s=1$, yield the required prime-number theorem after partial summation \cite{NewtonThorne2021II,JacquetShalika1976}.  For $m=0$ and $\boldsymbol k\ne0$, rational independence gives $\tau\ne0$; the ordinary prime-number theorem and partial summation show that the logarithmically normalized sum is $o(1)$.  The constant mode gives the total mass.
\end{proof}

\begin{remark}
The theorem is unconditional in its automorphic input, but its application to a selected list of zero ordinates retains the stated finite nonresonance assumption.  It concerns fixed-dimensional continuous test functions and fixed windows.  It does \emph{not} prove the shrinking-window local limit below, nor does it justify passage to the entire infinite zero set.
\end{remark}

\subsection{Weighted local time and Ces\`aro crossing count}

\paragraph{Literature.}
The weighted local-time normalization is the elliptic form of the statistic introduced in \cite{PlanatFalseCheb2026}; the underlying occupation-density heuristic for level sets of a flow is the coarea statement of \cref{prop:torus-local-time}, and the diagonal local limit it would require is of the type established for homogeneous flows in \cite{HillVelani1995,KleinbockMargulis1999}.

Assume that the elliptic race has a continuous limiting density $f_E$ and that a prime-sampled shrinking-window local limit holds jointly with the Frobenius amplitude.  Since the crossing window has total width
\begin{equation}
  2T_E(q)=\frac{|a_q(E)|\log q}{q},
\end{equation}
one expects
\begin{equation}
  \Pr(q\in\mathcal C_E(c))
  \sim \frac{|a_q(E)|\log q}{q}f_E(c).
  \label{eq:crossing-probability}
\end{equation}

\begin{conjecture}[Weighted elliptic crossing local time]
For a regular level $c$, define
\begin{equation}
  \mathcal L_E(X;c)
  =\frac{1}{\log X}
  \sum_{\substack{q\le X\\q\in\mathcal C_E(c)\\a_q(E)\ne0}}
  \frac{1}{|a_q(E)|}.
  \label{eq:weighted-local-time}
\end{equation}
Under ECRH and the joint prime-sampled local-limit hypothesis,
\begin{equation}
  \boxed{\mathcal L_E(X;c)\longrightarrow f_E(c).}
  \label{eq:local-time-limit}
\end{equation}
At the centred level,
\begin{equation}
  \mathcal L_E(X;\mu_E)\longrightarrow g_E(0).
\end{equation}
\end{conjecture}

For a non-CM curve,
\begin{equation}
  \mathbb E_{\ST}\!\left(\frac{|a_q(E)|}{\sqrt q}\right)
  =\frac{8}{3\pi}.
  \label{eq:ST-amplitude}
\end{equation}
A pointwise law $\#\mathcal C_E(c;X)\sim C\sqrt X$ is \emph{not} asserted: the weight $\log q/\sqrt q$ is exponentially concentrated at the endpoint in the logarithmic variable, so log-uniform phase equidistribution does not control such a limit.  The appropriate softened statistic is
\begin{equation}
  \mathcal A_E(X;c)
  =\frac{2}{\log X}
  \sum_{\substack{q\le X\\q\in\mathcal C_E(c)}}\frac{1}{\sqrt q}.
  \label{eq:Cesaro-count}
\end{equation}
Equivalently, up to a negligible boundary term, this is
\begin{equation}
  \frac{1}{\log X}
  \int_2^X\frac{\#\mathcal C_E(c;t)}{t^{3/2}}\,dt.
\end{equation}
The same heuristic gives the logarithmic Ces\`aro target
\begin{equation}
  \boxed{
  \mathcal A_E(X;c)\longrightarrow
  \frac{16}{3\pi}f_E(c).
  }
  \label{eq:Cesaro-limit}
\end{equation}
Both \eqref{eq:local-time-limit} and \eqref{eq:Cesaro-limit} remain diagonal local-limit conjectures, not consequences of \cref{eq:joint-equidistribution} alone.

\subsection{A four-stage finite-$X$ diagnostic}
\label{subsec:50700-crossing-pilot}

\paragraph{Literature.}
The ordinates are regenerated independently from modular coefficients by the method of \cite{Dokchitser2004,RubinsteinLNS2005} and checked against \cite{LMFDB}; the Frobenius traces are obtained by exact point counting as in \cite{Cremona1997}.

The conductor-$50700$ data permit a direct diagnostic after three normalization corrections: the race is the log-weighted field \eqref{eq:race}, the Fourier series is evaluated at its midpoint without an additional $H_E(q)$ translation, and the omitted prime powers that are exactly computable from the local coefficients are restored.  We use one integral representative of each of the seventeen rank-one isogeny classes and every good prime $100<q\le50000$.

The first $35$ or $36$ positive ordinates below height $20$ were independently regenerated.  The first ordinates agree with the stored values to at most $2.8\times10^{-6}$, and the completed central derivatives agree to relative error below $4.5\times10^{-16}$.  Put
\begin{equation}
  Z_{E,20}(u)
  =2\Re\sum_{0<\gamma_E\le20}
   \frac{\e^{i\gamma_Eu}}{\tfrac12+i\gamma_E}.
  \label{eq:truncated-zero-field}
\end{equation}
Let $c_E(p^m)$ denote the normalized local von Mangoldt coefficient: for a good prime, $c_E(p^m)=(\alpha_p^m+\beta_p^m)/p^{m/2}$, so $c_E(p^2)=\lambda_p^2-2$.  The exact correction for terms omitted from the good-prime first-power race is
\begin{equation}
  \mathcal P_E(q)
  =1+\frac1{\sqrt q}\left(
  \sum_{\substack{p\nmid N_E,\ m\ge2\\p^m\le q}}c_E(p^m)\log p
  +\sum_{\substack{p\mid N_E,\ m\ge1\\p^m\le q}}c_E(p^m)\log p
  \right).
  \label{eq:prime-power-correction}
\end{equation}
The $1$ cancels the stable mean $-1$ of the good-prime squares.  The prime-sampled spectral midpoint is therefore
\begin{equation}
  M_{E,20}^{*}(q)=Z_{E,20}(\log q)+\mathcal P_E(q),
  \label{eq:spectral-midpoint}
\end{equation}
not $Z_{E,20}(\log q)+H_E(q)$.

For a level offset $y=c-\mu_E$, the four stages are: the asymptotic Bessel-product density with Gaussian tail closure; finite continuous coarea local time of $Z_{E,20}$; the prime-sampled zero-field window $|M_{E,20}^{*}(q)-y|<|H_E(q)|$; and the exact arithmetic window $|B_E^{*}(q)-(\mu_E+y)|<T_E(q)$.  All discrete stages use the delayed normalization $1/\log(50000/100)$ and weight $1/|a_q(E)|$.

At the centred level $y=0$ the four stages give the ensemble statistics of \cref{tab:50700-four-stage-centre}.
\begin{table}[H]
\centering
\small
\caption{Four-stage centred diagnostic for the seventeen rank-one classes of conductor $50700$.}
\label{tab:50700-four-stage-centre}
\begin{tabular}{@{}lrrr@{}}
\toprule
Stage & Mean & Median & Total events\\
\midrule
Asymptotic Bessel density & $0.19118$ & $0.19680$ & --\\
Finite continuous zero flow & $0.24138$ & $0.15011$ & $114$\\
Prime-sampled zero field & $0.18168$ & $0.17175$ & $1293$\\
Exact log-weighted arithmetic crossings & $0.20526$ & $0.20668$ & $1271$\\
\bottomrule
\end{tabular}
\end{table}
The finite continuous mean remains unstable because the logarithmic interval has length only $6.2146\ldots$ and several curves pass near tangencies.

On the fixed grid $y=-2,-1.5,\ldots,2$, the profile errors relative to the asymptotic Bessel density are collected in \cref{tab:50700-profile-errors}.
\begin{table}[H]
\centering
\small
\caption{Ensemble profile discrepancy under the log-weighted midpoint normalization.}
\label{tab:50700-profile-errors}
\begin{tabular}{@{}lrrr@{}}
\toprule
Stage & Mean bias & RMSE & Maximum error\\
\midrule
Finite continuous zero flow & $0.02248$ & $0.04258$ & $0.05768$\\
Prime-sampled zero field & $-0.00194$ & $0.03212$ & $0.05117$\\
Exact log-weighted arithmetic crossings & $-0.00086$ & $0.02105$ & $0.03612$\\
\bottomrule
\end{tabular}
\end{table}

\Cref{fig:50700-four-stage-profile} plots the four profiles on this grid.  The three finite-$X$ stages reproduce both the scale and the gross shape of the asymptotic Bessel density, which is what the two tables quantify.  The event-level comparison below shows that this ensemble agreement does not descend to individual primes.
\begin{figure}[H]
  \centering
  \includegraphics[width=0.91\textwidth]{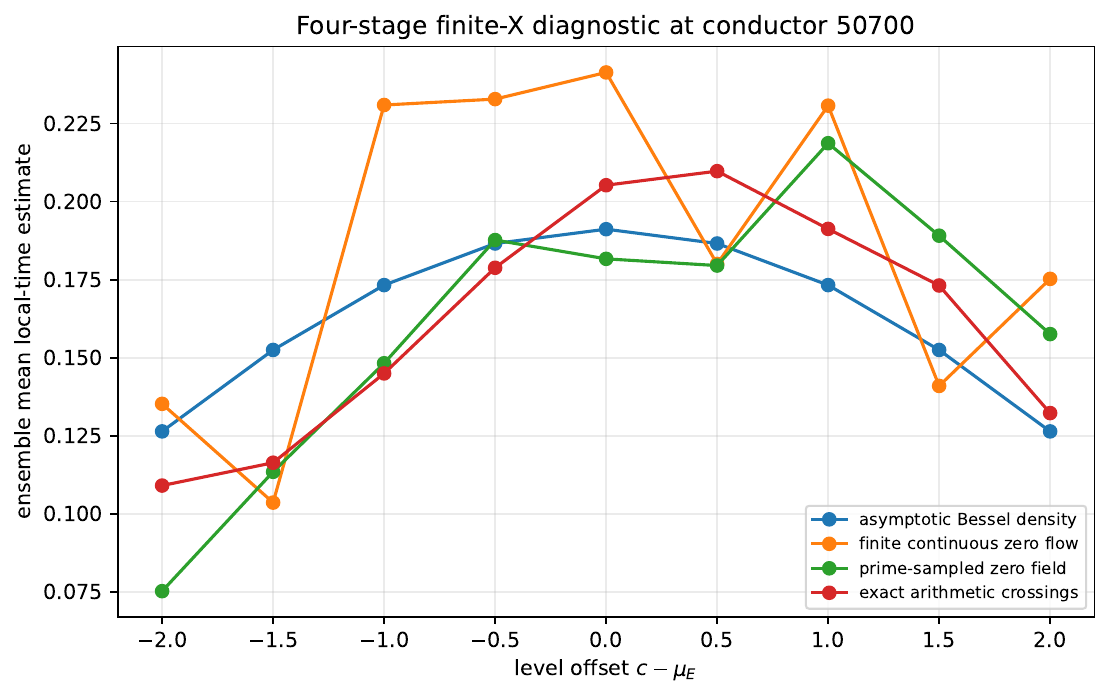}
  \caption{Four-stage finite-$X$ level profiles under the log-weighted midpoint normalization.  The close ensemble scale does not imply agreement of individual shrinking-window events.}
  \label{fig:50700-four-stage-profile}
\end{figure}

At event level, the spectral and arithmetic midpoint fields have mean correlation $0.9700$ and mean affine slope $0.9909$.  Their mean RMS discrepancy is $0.3910$, while the median Frobenius half-window is $0.02865$, a ratio of $13.6$.  The mean Jaccard overlap is $0.0633$, with recall $0.1222$ and precision $0.1380$.

These small overlaps do not diagnose a failed arithmetic transfer.  The omitted-zero variance above $T_0=20$ is
\begin{equation}
  V_E^{>20}
  \approx\frac{2}{\pi}\frac{L_E+\log20+1}{20}
  =0.2411,
  \qquad
  \sqrt{V_E^{>20}}=0.4910.
  \label{eq:tail-noise-floor}
\end{equation}
The observed mean standard deviation of the field error is $0.3878$, or $0.790$ of this predicted noise floor.  Thus the dominant uncertainty is compatible with the paper's own zero truncation.  \Cref{fig:50700-field-window} compares the residual with both the Frobenius windows and the $T_0=20$ tail scale.
\begin{figure}[H]
  \centering
  \includegraphics[width=0.82\textwidth]{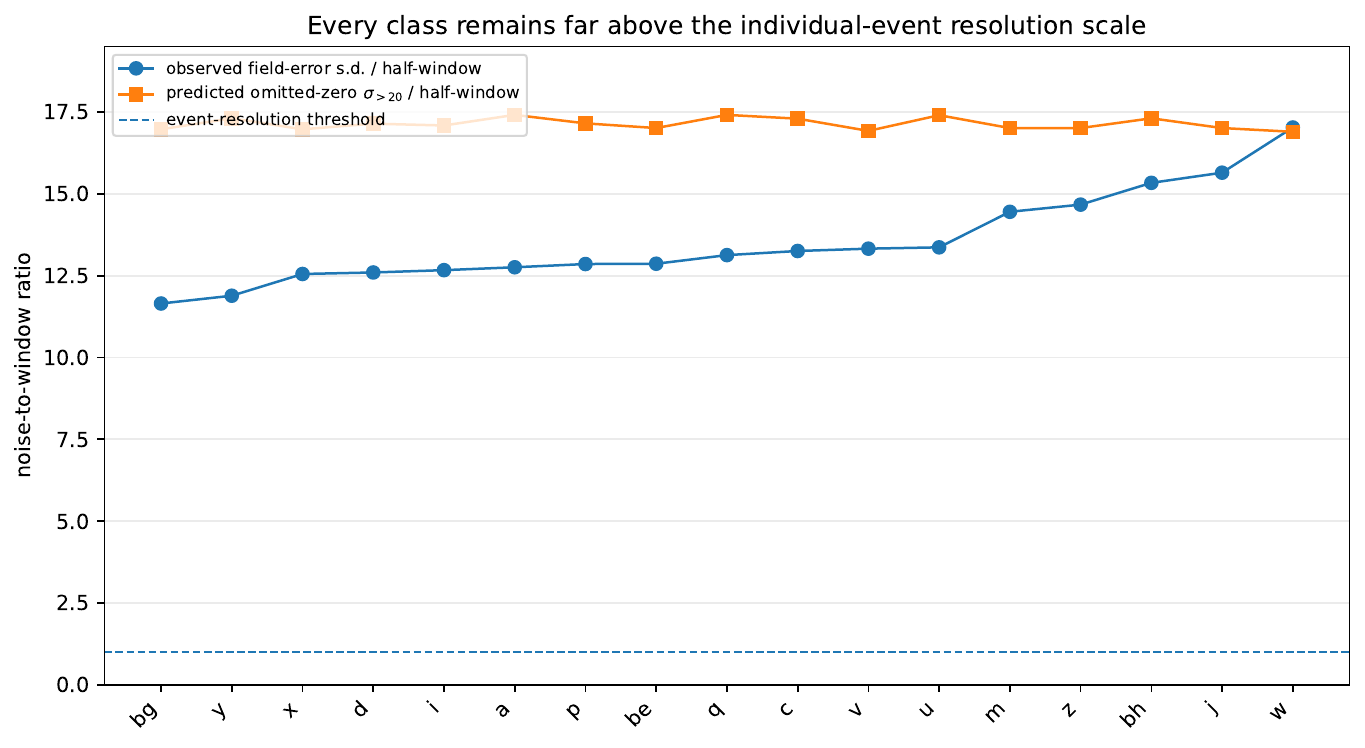}
  \caption{The classwise noise-to-window ratios remain far above the event-resolution threshold; the predicted omitted-zero contribution is shown for comparison.}
  \label{fig:50700-field-window}
\end{figure}

The correct conclusion is therefore
\begin{equation}
  \boxed{
  \text{the prime-sampled shrinking-target transfer is unobservable at }T_0=20.
  }
  \label{eq:unobservable-T20}
\end{equation}
Balancing the leading tail variance with the median half-window gives the heuristic resolution condition
\begin{equation}
  \frac{2}{\pi}\frac{L_E+\log T_0+1}{T_0}
  \lesssim(0.02865)^2,
\end{equation}
whose solution is $T_0\approx1.1\times10^4$, corresponding to roughly $4\times10^4$ positive ordinates per curve.  This is an error-budget estimate, not a sharp theorem.  Increasing the prime cutoff while retaining zeros only to height $20$ cannot remove the binding noise floor.  A decisive test requires either vastly higher zero truncation or a justified conditional/distributional treatment of the omitted tail on the crossing set.

\subsection{From a level grid to the zero product}

\paragraph{Literature.}
The characteristic function is the Bessel product of \cite{RubinsteinSarnak1994,ANS2014}, and the reconstruction of a measure on $[0,1]$ from its moments is the Hausdorff moment theorem \cite{ShohatTamarkin1943}.

The single value $g_E(0)$ is mainly a smooth concentration statistic and need not determine $J_E$.  The stronger observable is the full level profile
\begin{equation}
  y\longmapsto \mathcal L_E(X;\mu_E+y),
  \label{eq:level-profile}
\end{equation}
which is conjectured to approximate $g_E(y)$.  Its Fourier transform reconstructs $\widehat g_E$.

Under ECRH and LI, put
\begin{equation}
  b_\gamma=\frac{1}{1+4\gamma^2}\in(0,1).
\end{equation}
The Bessel product \eqref{eq:bessel-product} determines all power sums
\begin{equation}
  S_k(E)=\sum_{\gamma_E>0}b_\gamma^k,
  \qquad k\ge1,
  \label{eq:power-sums}
\end{equation}
through the Taylor coefficients of $\log\widehat g_E(t)$.  These moments determine the finite measure
\begin{equation}
  \nu_E=\sum_{\gamma_E>0}b_\gamma\,\delta_{b_\gamma}
\end{equation}
by the Hausdorff moment theorem.  Moreover,
\begin{equation}
  \boxed{
  J_E=\sum_{k\ge1}\frac{S_k(E)}{k}
  =\int_{[0,1]}
   \frac{-\log(1-x)}{x}\,d\nu_E(x).
  }
  \label{eq:J-from-density}
\end{equation}
Thus the \emph{whole} centred density determines the noncentral zero product in principle, whereas its value at a single level does not.

\begin{problem}[Prime-side spectral reconstruction]
Estimate the centred crossing local time over a grid of levels, correct its finite-$X$ bias using truncated Bessel products, reconstruct the first spectral moments $S_k(E)$, and compare the resulting approximation to $J_E$ with the direct low-zero product.  Determine whether the accuracy is sufficient for the BSD square sieve.
\end{problem}

Finite-$X$ bias must be modelled curve by curve.  Since it is governed by the same low ordinates that the statistic seeks to recover, using a common prime cutoff across a family can create a spurious first-zero signal.

\section{Discussion and outlook}
\label{sec:outlook}

The exact part of the framework is compact.  The Hadamard identity and the decomposition $J_E=V_E/8+\Delta_E$ isolate the noncentral contribution to the BSD leading coefficient; the phase observable is Morse; and the finite-dimensional coarea formula identifies its continuous level density.  The conductor-$50700$ calculation supplies two complementary diagnostics: the first-zero kernel arc and the discrete $35/36$ tail split.

Three limitations define the next mathematical steps.  First, a finite-conductor odd-orthogonal comparison requires one exact counting convention and the explicit adjacent-integer interpolation \eqref{eq:adjacent-dimension-interpolation}.  Alternative density-slope and local-density matchings should be reported only as sensitivity analyses.  Second, a non-circular BSD information interval requires an effective bound $B_E(T)$ in \cref{prop:counting-to-tail}; the density integral alone cannot certify the tail.  Third, the prime-sampled crossing problem needs control of the omitted-zero field at the scale $T_E(q)$, not merely a high correlation of coarse race values.

A conditional stochastic closure can quantify this last limitation without pretending to recover individual events.  If the computed low-zero field at $q$ is $z_{E,T_0}(q)$ and the omitted tail is modelled by $G_{E,T_0}\sim N(0,\sigma_{E,T_0}^2)$, then the conditional crossing probability is
\begin{align}
 p_{E,q}(c)=&\;\Phi\!\left(
 \frac{c+T_E(q)-z_{E,T_0}(q)}{\sigma_{E,T_0}}
 \right)\notag\\
 &-\Phi\!\left(
 \frac{c-T_E(q)-z_{E,T_0}(q)}{\sigma_{E,T_0}}
 \right).
 \label{eq:conditional-tail-closure}
\end{align}
This model can provide expected counts, uncertainty bands and power estimates, but it cannot certify whether a particular prime crosses because the omitted phases remain unknown.

Two broader directions appear especially natural.  A large certified non-CM mixed-parity family would test whether the hard-edge tail exponent distinguishes even and odd orthogonal symmetry.  On the topological side, extending the level grid from the central region toward the critical values of $\Phi_{E,M}$ would probe changes in the Morse sublevel filtration and the non-Gaussian tails of the coarea density.  Both directions are independent of the circular BSD calibration used in the present fixed-conductor audit.

\section{Conclusion}

The central zero-product statistic has the exact structure
\[
  J_E=\frac18V_E+\Delta_E,
  \qquad \Delta_E>0.
\]
For the seventeen rank-one classes of conductor $50700$, the nonlinear remainder is predominantly a first-zero quantity, and the total pairs follow the corresponding translated kernel arc.  Independent edge values separate the tail beyond height $20$ according to whether $35$ or $36$ positive ordinates have been listed, giving a direct numerical diagnostic of the zero multiset.

The random-matrix and topological conclusions are distinct but complementary.  Haar $SO(2N+1)$ produces a fixed-dimensional hard-edge exponent $3/2$, while its finite-conductor normalization requires an explicitly frozen dimension convention.  A finite zero truncation, by contrast, gives an exact Morse function on a phase torus: its critical values organize the sublevel topology, and the coarea formula gives the continuous local density.

The elliptic crossing construction transfers the midpoint and half-jump mechanism of false Chebyshev primes to Frobenius jumps.  The log-weighted conductor-$50700$ experiment shows that the zero-derived and arithmetic fields agree at a coarse level, but a height-$20$ truncation cannot resolve individual shrinking windows.  This is a quantitative resolution barrier, not evidence against the transfer.  The paper therefore supplies an exact ECRH--BSD information architecture, a finite-dimensional hard-edge theorem, a Morse--coarea model for level crossings, and a precise formulation of the remaining prime-sampled problem.

\begingroup
\small
\setlength{\parskip}{0pt}

\endgroup

\end{document}